\documentclass[11pt,a4paper]{amsart}
\numberwithin{equation}{section}
\usepackage{rotating}
\usepackage{mathrsfs}
\usepackage{amscd,amssymb,amsopn,amsmath,amsthm,graphics,amsfonts,enumerate,verbatim,calc}
\usepackage[all]{xy}
\usepackage[utf8]{inputenc}
\usepackage{color}

\usepackage[a4paper,margin=1in]{geometry}
\usepackage[utf8]{inputenc}
\usepackage{amscd,amssymb,amsopn,amsmath,amsthm,graphics,amsfonts,enumerate,verbatim,calc}
\usepackage[T1]{fontenc}
\usepackage{lmodern}
\usepackage{enumitem}
\usepackage{hyperref}
\usepackage{microtype}\usepackage{hyperref}
\usepackage{geometry}
\usepackage{enumitem}
\usepackage{booktabs}
\usepackage{tikz-cd}
\hypersetup{colorlinks=true,linkcolor=blue,citecolor=blue,urlcolor=blue}

\newtheorem{theorem}{Theorem}[section]
\newtheorem{proposition}[theorem]{Proposition}
\newtheorem{corollary}[theorem]{Corollary}
\newtheorem{lemma}[theorem]{Lemma}
\newtheorem{fact}[theorem]{Fact}
\newtheorem{example}[theorem]{Example}
\newtheorem{observation}[theorem]{Observation}
\newtheorem{remark}[theorem]{Remark}
\newtheorem{question}[theorem]{Question}

\newtheorem{definition}[theorem]{Definition}
\newcommand{\Min}{\operatorname{Min}}
\newcommand{\rad}{\operatorname{rad}}
\newcommand{\Spec}{\operatorname{Spec}}
\newcommand{\Ext}{\operatorname{Ext}}
\newcommand{\Hom}{\operatorname{Hom}}
\newcommand{\depth}{\operatorname{depth}}
\newcommand{\Ann}{\operatorname{Ann}}

\newcommand{\Ht}{\operatorname{ht}}

\newcommand{\pd}{\operatorname{pd}}

\newcommand{\fm}{\frak{m}}
\DeclareMathOperator{\coker}{coker}

\begin{document}
	
\author[M. Asgharzadeh and E. Mahdavi]{Mohsen Asgharzadeh and Elham Mahdavi}
\date{}

\address{M. Asgharzadeh}
\email{mohsenasgharzadeh@gmail.com}

\address{E. Mahdavi}
\email{elham.mahdavi.gh@gmail.com}

	\title[Embedding  into   Quotient  ]
{Embedding  into  Quotient}

\subjclass[2020]{13A15, 13E15.}
	\keywords{Cohen- Macaulay; embeddings and quotients; finiteness properties; reduced    rings; nilpotent.  }

	\begin{abstract}
We study Bass's finiteness problem for embeddings into proper quotients. Given a
commutative ring \(R\), a reduced finitely generated \(R\)-algebra \(A\), and an
ideal \(J\triangleleft A\), we ask whether an \(R\)-algebra monomorphism
\(A\hookrightarrow_R A/J\) forces \(J=0\). We give affirmative answers by presenting two arguments. Also, we discuss not only in the reduced case, but  also in several
non-reduced cases:  when \(R\) is artinian and \(A\) is finite over \(R\); when \(R\) is
zero-dimensional and \(A\) is finite projective over \(R\); and in a series of
Cohen--Macaulay settings. In the module-theoretic direction,
for a noetherian reduced ring, and a
finitely generated, and torsion-free \(A\)-module \(M\), we show that
\(M\hookrightarrow_R M/JM\) implies that \(J\) is contained in a minimal prime of
\(A\); if \(M\) is faithful, then \(J=0\). Also, dual versions investigated.
 Examples illustrate the necessity of  hypotheses.
\end{abstract}

	\maketitle
	 \tableofcontents

\section{Introduction}
	 
This paper starts with the following implicit question of Bass \cite{Bass}.

\begin{question}\label{Bb}
	If \(A\) is a finitely generated reduced commutative \(R\)-algebra and
	\(A\hookrightarrow A/J\) is an \(R\)-algebra monomorphism, must \(J=0\)?
\end{question}

Bass proved:

\begin{theorem}\label{bt}(Bass, see \cite[Main result]{Bass}).
	Let $R$ be a commutative ring, let $A$ be a reduced finitely presented
	$R$-algebra, and let $J$ be an ideal of $A$. If there is an injective
	$R$-algebra monorphism $A\to A/J$, then $J=0$.
\end{theorem}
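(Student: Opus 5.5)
The plan is to reduce to the case where $R$ is noetherian, and then use a dimension and minimal-prime count on $\Spec A$ to force $J=0$.

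\emph{Setup and reduction to noetherian $R$.} Let $\varphi\colon A\to A/J$ be the given injective $R$-algebra map. Compose it with the surjection $\pi\colon A\to A/J$. Then $\psi=\varphi\circ\pi$ is not injective unless $J=0$. Instead, I will use the standard finite presentation descent. Write $A=R[x_1,\dots,x_n]/(f_1,\dots,f_m)$. Choose lifts $g_i$ of $\varphi(x_i)$. Since $\varphi$ is well defined, the elements $f_j(g_1,\dots,g_n)$ lie in $J+(f)$. Pick an element $a\in J$, which we want to show is zero.

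Let $R_0\subseteq R$ be the $\mathbb{Z}$-subalgebra generated by all coefficients involved: those of the $f_j$, the $g_i$, $a$, and the witnesses of the relations above. Then $R_0$ is noetherian, and we get a finitely generated $R_0$-algebra $A_0$ with an induced map $A_0\to A_0/J_0$, where $J_0$ is the ideal generated by the relevant elements. The main obstacle is that injectivity and reducedness need not descend to $A_0$. The way around this is to pass to the direct limit over all such $R_0$. Then $A=\varinjlim A_0$, and the kernel and nilradical statements hold at a large enough stage because only finitely many elements witness each failure. Concretely, I will replace $A_0$ by its image in $A$, which is a reduced finitely generated $R_0$-subalgebra. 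The difficulty is then to arrange that $\varphi$ maps this image into its own quotient; enlarging it by images of generators handles this, and the approach is to iterate finitely using noetherianity. This is the step I expect to be hardest.

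\emph{Noetherian case.} Assume $A$ is a reduced noetherian ring and $\varphi\colon A\hookrightarrow A/J$ is injective. Iterating gives an ascending chain of ideals: $J_1=J$, and $J_k=\ker(A\to A/J\to\cdots)$, where each $J_k$ is the kernel of the composite $A\to A/J_{k}$. This chain stabilizes by noetherianity. Hence we may assume we have a surjection $A/J_k\to A/J_{k+1}$ that is an isomorphism together with an injection, and this yields a self-map of $A/J_k$ which is injective.

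The cleaner route is to use $\Min A$, which is finite. An injection $A\hookrightarrow B=A/J$ induces a dominant map $\Spec B\to \Spec A$, so every minimal prime of $A$ is the contraction of a minimal prime of $B$. Hence $|\Min B|\ge|\Min A|$. On the other hand, $\Spec B=V(J)\subseteq\Spec A$.

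Localize at a minimal prime $\mathfrak p$ of $R$, or more simply compare total rings of fractions over each fibre. Since $A$ is reduced, each $A_{\mathfrak q}$ with $\mathfrak q\in\Min A$ is a field. If $J\neq0$, then $J\not\subseteq\mathfrak q$ for some minimal prime $\mathfrak q$, because in a reduced ring the intersection of the minimal primes is $0$. Then $V(J)$ misses the irreducible component $V(\mathfrak q)$ as a component. Counting irreducible components of maximal dimension in each fibre over $R$, together with the transcendence degree over $\kappa(\mathfrak p)$, gives a strict inequality that contradicts dominance. Therefore $J\subseteq\bigcap_{\mathfrak q\in\Min A}\mathfrak q=0$.
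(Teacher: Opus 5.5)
The paper does not prove this statement. It quotes it from Bass and only uses it, for example in Theorem~\ref{thm:answer}, whose descent is exactly your first step. That step is easier than you fear. Let $\bar x_i\in A$ be the images of the $x_i$, choose $g_i$ with $\varphi(\bar x_i)=\pi(g_i(\bar x))$ and $h$ with $a=h(\bar x)$, and let $R_0$ be generated by the coefficients of the $g_i$ and $h$. Then $B=R_0[\bar x_1,\dots,\bar x_n]\subseteq A$ is reduced and finitely generated over $\mathbb Z$, $a\in B\cap J$, and $\varphi(B)\subseteq\pi(B)\cong B/(B\cap J)$ at once. No iteration is needed, and the relations $f_j$ are never used.

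\textbf{First gap: the dimension inequality.} The remaining case is the whole content of Bass's theorem. As literally stated (``$A$ reduced noetherian'') it is false. The algebraic closure of $\mathbb C((x))$ is algebraically closed of characteristic $0$ and cardinality $2^{\aleph_0}$, hence isomorphic to $\mathbb C$. So there is an abstract field embedding $\mathbb C((x))\hookrightarrow\mathbb C$, which restricts to an injective ring map $\mathbb C[[x]]\hookrightarrow\mathbb C[[x]]/(x)$. What you must use is finite generation over $\mathbb Z$, through the inequality $\dim D\le\dim E$ for finitely generated $\mathbb Z$-domains $D\subseteq E$; this holds since $\dim=\operatorname{trdeg}_{\mathbb F_p}$, resp.\ $1+\operatorname{trdeg}_{\mathbb Q}$. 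Applied to $A/\mathfrak p\hookrightarrow (A/J)/\mathfrak Q$, it shows that each $\mathfrak p\in\Min A$ equals $\varphi^{-1}(\mathfrak Q)$ for some $\mathfrak Q\in\Min(A/J)$ with $\dim A/\mathfrak Q\ge\dim A/\mathfrak p$.

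Your fibrewise substitute does not provide this:
\begin{enumerate}
\item[(i)] Base change to $\kappa(\mathfrak p)$ preserves neither injectivity nor reducedness.
\item[(ii)] Localizing at minimal primes of $R$ forgets components over closed points. For $R=\mathbb Z$, $A=\mathbb Z[x]\times\mathbb F_p[y]$ and $J=0\times(y)$, one has $J\otimes\mathbb Q=0$.
\item[(iii)] $\operatorname{trdeg}_{\kappa(\mathfrak P\cap R)}\kappa(\mathfrak P)$ need not drop under specialization. For $R=k[t]_{(t)}$ and $A=R[x]$ it equals $1$ at both $0\subsetneq tA$.
\end{enumerate}

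\textbf{Second gap: the counting.} The inequality $|\Min(A/J)|\ge|\Min A|$ is empty on its own, since primes minimal over $J$ need not be minimal in $A$ and can be arbitrarily many. Comparing only components of maximal dimension gives nothing when the minimal prime $\mathfrak q$ with $J\not\subseteq\mathfrak q$ has smaller dimension.

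What is needed is a descending induction on $d$ for the statement: every $\mathfrak p\in\Min A$ with $\dim A/\mathfrak p\ge d$ contains $J$, and $\mathfrak Q\mapsto\varphi^{-1}(\mathfrak Q)$ permutes this finite set. For the step:
\begin{enumerate}
\item[(1)] Let $\dim A/\mathfrak p=d$, let $\mathfrak Q\in\Min(A/J)$ lie over $\mathfrak p$ (viewed as a prime of $A$ containing $J$), and let $\mathfrak p'\subseteq\mathfrak Q$ be a minimal prime of $A$.
\item[(2)] If $\dim A/\mathfrak p'>d$, then $J\subseteq\mathfrak p'$ forces $\mathfrak Q=\mathfrak p'$. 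By induction its contraction then has dimension $>d$, contradicting $\varphi^{-1}(\mathfrak Q)=\mathfrak p$.
\item[(3)] Hence $d\le\dim A/\mathfrak Q\le\dim A/\mathfrak p'\le d$. Since dimension drops strictly along proper inclusions of primes, $\mathfrak Q=\mathfrak p'$ is a $d$-dimensional minimal prime of $A$.
\item[(4)] The assignment $\mathfrak p\mapsto\mathfrak Q$ is injective on the finite set of $d$-dimensional minimal primes, hence bijective, so all of them contain $J$.
\end{enumerate}
At the end, $J\subseteq\bigcap\Min A=0$. This is the heart of the proof, and your proposal only gestures at it with ``a strict inequality that contradicts dominance''.
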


Also, he showed that the finitely generated hypothesis is necessary. Indeed, for \(A=R[X_1,X_2,\ldots]\), there is an \(R\)-algebra isomorphism \(A\cong_R A/(X_1)\), but \(A\) is not finitely generated. He also showed the importance of the reduced assumption (see \cite[Example 1.3]{Bass}).

The purpose of the present paper is to study Question \ref{Bb} and several variants, both in the algebra and module settings. We begin with the following result (later we improve it by replacing domain with reduced):

\begin{theorem}\label{ia} (See Theorem \ref{a}).
	Let \(R\) be a domain, let \(A\) be a reduced finitely generated \(R\)-algebra which is torsion-free as an \(R\)-module, and let \(J\) be an ideal of \(A\). If there exists an \(R\)-algebra monomorphism
$
	\varphi:A\hookrightarrow_R A/J,
$
	then
$
	J=0.
	$
\end{theorem}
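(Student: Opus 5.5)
Since $A$ is torsion-free over the domain $R$, the plan is to pass to the fraction field $K=\operatorname{Frac}(R)$ and argue there. Set $A_K=A\otimes_R K$. Torsion-freeness gives $A\hookrightarrow A_K$, so $A_K$ is a reduced finitely generated $K$-algebra, and $A_K\neq 0$ unless $A=0$. Localizing $\varphi$ at $S=R\setminus\{0\}$ gives an injective $K$-algebra map
\[
\varphi_K:A_K\hookrightarrow (A/J)_K=A_K/J_K,
\]
because localization is exact.

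The first goal is $J_K=0$, which I will get by a dimension-and-component count over the field $K$. Since $A_K$ is reduced and noetherian, $\dim A_K=\dim A_K/J_K$ and the numbers of minimal primes can be compared. Take a minimal prime $\mathfrak q$ of $A_K/J_K$ and set $\mathfrak p=\varphi_K^{-1}(\mathfrak q)$. Injectivity of $\varphi_K$ means that every minimal prime of $A_K$ is a contraction of some minimal prime of $A_K/J_K$: an injective map of noetherian rings induces a dominant map on $\Spec$. Transcendence degree cannot increase under a composite of an injection and a domain quotient, so
\[
\dim A_K/\mathfrak p\le \dim (A_K/J_K)/\mathfrak q .
\]
Order the components by dimension and compare counts, namely the number of components of each top dimension $d$. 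The dominance statement says that the components of $A_K$ of dimension $d$ are each hit by a component of $A_K/J_K$ of dimension at least $d$. Components of $A_K/J_K$ are closed subsets of components of $A_K$, so by downward induction on $d$ each component of $A_K$ must itself lie in $V(J_K)$. Then $J_K$ lies in every minimal prime, so $J_K\subseteq \mathrm{nil}(A_K)=0$. This counting step is where I expect the main obstacle, and I would make it precise with a lexicographic invariant: the tuple $(n_d)_d$ where $n_d$ is the number of minimal primes with $\dim A/\mathfrak p=d$. One shows that an injection forces the invariant of the source to be at most that of the target, while a proper closed subset of $\Spec$ strictly lowers it unless it contains all components.

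Finally, $J_K=0$ means every element of $J$ is killed by some nonzero $r\in R$. Since $J\subseteq A$ and $A$ is torsion-free over $R$, this gives $J=0$.

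As a fallback for the key step, if the counting becomes awkward, one could instead reduce to Bass's Theorem \ref{bt}. Over the field $K$, the finitely generated algebra $A_K$ is finitely presented, since $K[X_1,\dots,X_n]$ is noetherian, so Theorem \ref{bt} applied to $\varphi_K$ gives $J_K=0$ directly. I would present this as the primary route and keep the geometric count as the self-contained alternative.
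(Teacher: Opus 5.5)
Your declared primary route is exactly the paper's proof of Theorem~\ref{a}: localize at $S=R\setminus\{0\}$, note that $A_K$ is a reduced finitely presented $K$-algebra, apply Theorem~\ref{bt} to $\varphi_K$ to get $J_K=0$, and conclude $J=0$ because $J\subseteq A$ is torsion-free. One small correction: $A_K$ is reduced because it is a localization of the reduced ring $A$, not because $A\hookrightarrow A_K$.

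Your geometric alternative is a genuinely different argument, and it is sound once made precise. For each $\mathfrak p\in\Min(A_K)$, choose some $\mathfrak q\in\Min(A_K/J_K)$ with $\varphi_K^{-1}(\mathfrak q)=\mathfrak p$. This assignment is \emph{injective}, since a prime contracts to a single prime. It does not decrease dimension, by transcendence degree applied to $A_K/\mathfrak p\hookrightarrow (A_K/J_K)/\mathfrak q$. Hence, for every $d$, $A_K$ has at most as many components of dimension $\ge d$ as $A_K/J_K$, which gives $(n_d(A_K))_d\le_{\mathrm{lex}}(n_d(A_K/J_K))_d$. For the reverse inequality, the needed fact is that a prime strictly containing a minimal prime $\mathfrak p$ strictly lowers the dimension of the affine domain $A_K/\mathfrak p$. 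With this, downward induction on $d$ gives $(n_d(A_K/J_K))_d\le_{\mathrm{lex}}(n_d(A_K))_d$, with equality only if $J_K$ lies in every minimal prime.

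The trade-off is as follows. The paper's argument is two lines but rests entirely on Bass's theorem. Yours is longer but self-contained; in effect it reproves Bass's theorem over a field. Reducedness enters only at the very last step, so without it your argument still gives $J_K\subseteq\operatorname{nil}(A_K)$, which is consistent with Bass's non-reduced counterexample.
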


In Section 2 we also treat non-reduced algebras and prove an affirmative answer in several cases. We then relate Question \ref{Bb} to (co-)Hopficity. Recall that an \(R\)-algebra \(A\) is Hopfian if every surjective \(R\)-endomorphism \(A\to A\) is injective. If \(A\cong_R A/J\) for some nonzero ideal \(J\), then \(A\) admits a surjective non-injective \(R\)-endomorphism; moreover, \(A\ncong_A A/J\). We also prove several further positive results. The zero-dimensional case is considered in Lemma \ref{lem:zero-dimensional-base} and Theorem \ref{thm:artinian-base}. In the Cohen--Macaulay direction, Proposition \ref{thm:CM-reduction} shows that, under the natural maximal Cohen--Macaulay hypotheses, an \(R\)-algebra monomorphism \(A\hookrightarrow A/J\) again forces \(J=0\). The proof uses induction on \(\dim R\), reduction modulo a maximal regular sequence, and Krull's intersection theorem. In the opposite direction, Proposition \ref{op} presents a vanishing result for \(\operatorname{coker}(\varphi:A\hookrightarrow_R A/J)\) under a constructive hypothesis starting with a non-Cohen--Macaulay ring \(R\); this again forces \(J=0\).

In Section 3 we show:

\begin{theorem}\label{ia2} (See Theorem \ref{thm:answer}).
Question \ref{Bb} is true.
\end{theorem}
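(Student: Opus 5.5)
The plan is to reduce to Bass's Theorem \ref{bt} by passing from a finitely generated algebra to a finitely presented one over a noetherian base, and then to come back down. Write $A=R[x_1,\dots,x_n]/I$ and let $\varphi:A\hookrightarrow A/J$ be the given $R$-algebra monomorphism. Suppose, towards a contradiction, that $J\neq 0$ and pick $0\neq j\in J$.

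\textbf{Descent.} Choose the following finitely many elements of $R$:
\begin{itemize}
\item for each $i$, lifts $y_i\in R[x]$ of the elements $\varphi(\bar x_i)\in A/J$;
\item the coefficients of a lift of $j$;
\item for each witness needed below, the coefficients involved.
\end{itemize}
Let $R_0\subseteq R$ be the $\mathbb{Z}$-subalgebra they generate, which is noetherian. Set
\[
A_0:=\operatorname{im}\bigl(R_0[x]\to A\bigr)=R_0[x]/(I\cap R_0[x]).
\]
This is a subring of $A$, hence reduced, and it is finitely presented over $R_0$ by Hilbert's basis theorem. Put $J_0:=J\cap A_0$. Then $j\in J_0$, so $J_0\neq 0$.

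\textbf{Making $\varphi$ descend.} The map $\varphi$ sends $A_0$ into the image of $R_0[x]$ in $A/J$, which is $A_0/J_0$. Indeed, $\varphi(\bar x_i)$ is the image of $y_i\in R_0[x]$. So $\varphi$ restricts to an $R_0$-algebra map
\[
\varphi_0:A_0\to A_0/J_0 ,
\]
and we need it to be well defined and injective. Injectivity is automatic: $A_0/J_0\hookrightarrow A/J$ is injective, since $J_0=J\cap A_0$, and the composite $A_0\to A\to A/J$ is injective. Well-definedness is also automatic, because $\varphi_0$ is just a restriction of $\varphi$. In fact no further witnesses are needed: the only point requiring care was that $\varphi(A_0)\subseteq A_0/J_0$ inside $A/J$, and this holds because the generators map into it.

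\textbf{Conclusion.} Theorem \ref{bt}, applied over $R_0$ to the reduced finitely presented $R_0$-algebra $A_0$ with the ideal $J_0$ and the monomorphism $\varphi_0$, gives $J_0=0$. This contradicts $j\in J_0$, so $J=0$.

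The main obstacle I expect is the descent step. One has to make sure that $A_0$ really is finitely presented and reduced, and that $\varphi$ maps $A_0$ into $A_0/J_0$ rather than merely into $A/J$. Both are handled above by taking $A_0$ to be an image inside $A$ rather than a formal base change. If an independent argument is preferred instead of relying on Bass, I would localize at a minimal prime of $A_0$ and argue with Theorem \ref{ia}. The first attempt, however, is the reduction above.
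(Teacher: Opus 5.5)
Your proposal is correct and is essentially the paper's proof. You descend to the noetherian subring $R_0$ generated by the coefficients of lifts of the $\varphi(\bar x_i)$ and of $j$, and take $A_0$ (the paper's $B$) to be the $R_0$-subalgebra of $A$ on the same generators, with $J_0=J\cap A_0$ (the paper's $L$); you then identify $\pi(A_0)\cong A_0/J_0$ to get an injective $R_0$-algebra map $A_0\to A_0/J_0$ and invoke Theorem~\ref{bt}, differing only cosmetically by arguing by contradiction rather than directly for arbitrary $j\in J$. As minor points, your third bullet about extra ``witnesses'' is superfluous (as you note), and in your injectivity sentence the map $A\to A/J$ in the composite must be $\varphi$, not $\pi$.
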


Section 4 develops a module-theoretic analogue. For a noetherian reduced ring \(R\), an ideal \(J\triangleleft R\), and a finitely generated torsion-free \(R\)-module \(M\), Theorem \ref{31} proves that
\[
M\hookrightarrow_R M/JM
\quad\Longrightarrow\quad
J\subseteq\mathfrak p
\text{ for some }\mathfrak p\in\operatorname{Min}(R).
\]
If \(M\) is additionally faithful over \(R\), then \(J=0\). We record a connection to local cohomology in Proposition \ref{lc}. We also discuss the corresponding result for divisible modules; see Proposition \ref{inj}. A non-noetherian example shows that the noetherian hypothesis cannot be dropped.

\begin{theorem}\label{i41}(See {Theorem} \ref{41}).
	Let \(R\subseteq A\) be an extension of noetherian reduced rings such that \(A\) is torsion-free as an \(R\)-module. Let \(J\triangleleft A\), and let \(M\) be a finitely generated \(R\)-module which is torsion-free as an \(A\)-module. If
$
	M\hookrightarrow_R M/JM,
$
	then
$
	J\subseteq\mathfrak p
	$
	for some \(\mathfrak p\in\operatorname{Min}(A)\).
\end{theorem}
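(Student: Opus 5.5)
The plan is to reduce to a finite-length statement by localizing at the non-zerodivisors of $R$. Then use torsion-freeness twice: once to return from the localization to $M$, and once to turn $JM=0$ into the statement that $J$ consists of zero-divisors of $A$. Throughout I assume $M\neq 0$. For $M=0$ the hypothesis holds for every $J$, including $J=A$, so the conclusion fails and this degenerate case has to be excluded.

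\emph{Step 1: $M$ is torsion-free over $R$.} Let $S\subseteq R$ be the set of non-zerodivisors of $R$. Since $R$ is noetherian and reduced, $S=R\setminus\bigcup_{\mathfrak q\in\Min(R)}\mathfrak q$. Because $A$ is torsion-free as an $R$-module, each $s\in S$ acts injectively on $A$, so $s$ is a non-zerodivisor of $A$. Since $M$ is torsion-free as an $A$-module, $s$ then acts injectively on $M$. Hence the natural map $M\to S^{-1}M$ is injective.

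\emph{Step 2: the finite-length argument.} Localizing the given $R$-monomorphism $M\hookrightarrow M/JM$ at $S$ gives an injective $S^{-1}R$-linear map
\[
S^{-1}M\hookrightarrow S^{-1}M/J\,S^{-1}M .
\]
Here $S^{-1}R$ is a reduced artinian ring, namely a finite product of fields $R_{\mathfrak q}$ for $\mathfrak q\in\Min(R)$. Also $S^{-1}M$ is finitely generated over it, because $M$ is finitely generated over $R$, so $S^{-1}M$ has finite length. The injection gives $\ell(S^{-1}M)\le \ell(S^{-1}M/JS^{-1}M)$, while the surjection $S^{-1}M\twoheadrightarrow S^{-1}M/JS^{-1}M$ gives the reverse inequality. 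Additivity of length then yields $\ell(JS^{-1}M)=0$, so $J\cdot S^{-1}M=0$. Combined with Step 1, this gives $JM=0$ inside $M\subseteq S^{-1}M$, that is, $J\subseteq \Ann_A(M)$.

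\emph{Step 3: from the annihilator to a minimal prime.} If $x\in J$ were a non-zerodivisor of $A$, then $xm=0$ for some $0\neq m\in M$ would make $m$ an $A$-torsion element, contradicting torsion-freeness. Since $M\neq0$, every element of $J$ is therefore a zero-divisor of $A$. As $A$ is noetherian and reduced, its set of zero-divisors is $\bigcup_{\mathfrak p\in\Min(A)}\mathfrak p$, a finite union of primes. Prime avoidance then gives $J\subseteq\mathfrak p$ for some $\mathfrak p\in\Min(A)$.

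The step needing the most care is Step 1, the passage from $A$-torsion-freeness to $R$-torsion-freeness. This is exactly where the hypothesis that $A$ is torsion-free over $R$ is used, and it is what allows the localized equality $JS^{-1}M=0$ to be pulled back to $M$. The rest is the standard length count, essentially the argument behind Theorem \ref{31}, with prime avoidance applied in $A$ rather than in $R$.
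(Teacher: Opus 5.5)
Your proof is correct and follows essentially the same route as the paper: deduce $R$-torsion-freeness of $M$ from that of $A$ over $R$, kill $JM$ generically by a length count (the paper does a vector-space dimension count at each $\mathfrak p\in\Min(R)$), return to $JM=0$ via torsion-freeness, and finish with prime avoidance in $\operatorname{Zd}(A)=\bigcup_{\mathfrak p\in\Min(A)}\mathfrak p$. The differences are minor: you use a single localization at the non-zerodivisors $S$ together with the injection $M\hookrightarrow S^{-1}M$ instead of the paper's detour through associated primes, and you state explicitly the hypothesis $M\neq 0$, which the paper's proof also needs but leaves implicit.
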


We also record a rank version and give an example showing that the finiteness of \(M\) as an \(R\)-module is essential.

Finally, Section 5 is devoted to the dual of Question \ref{Bb}: when does the existence of an \(R\)-linear epimorphism \(A\twoheadrightarrow J\) or \(J\twoheadrightarrow A\) force \(J\cong_R A\)? The answer depends heavily on the hypotheses: over a domain \(A\) finite over \(R\), any such epimorphism is automatically an isomorphism (see Theorem \ref{thm:dual-domain}); in the artinian case, an epimorphism \(J\twoheadrightarrow A\) is necessarily an isomorphism (see Proposition \ref{prop:artinian-dual}), while an epimorphism \(A\twoheadrightarrow J\) need not be. In the Cohen--Macaulay setting, we show that an epimorphism \(J\twoheadrightarrow A\) is impossible under natural conditions (see Proposition \ref{mcmno}). As an application, we show:

\begin{corollary}(See {Corollary} \ref{ccormcmno}).
	Suppose \(R=A\) is a local ring of dimension one and \(P\in \operatorname{Ass}(A)\). Then there is no \(\pi:P\twoheadrightarrow_R A\).
\end{corollary}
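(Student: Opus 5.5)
The approach is to use the defining property of an associated prime directly. Since $P\in\operatorname{Ass}(A)$, there is a nonzero element $x\in A$ with $P=(0:_A x)$. In particular $x$ annihilates every element of $P$, so $xP=0$. The idea is that the annihilator of $P$ is nonzero, while the annihilator of $A$ is zero, and an $R$-linear surjection can only enlarge the annihilator from source to target.

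Suppose, to get a contradiction, that $\pi:P\twoheadrightarrow_R A$ is surjective. Recall that $R=A$, so $\pi$ is $A$-linear.

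1. Pick $p\in P$ with $\pi(p)=1$, which exists because $\pi$ is onto.
2. By linearity, $x=x\cdot 1=x\,\pi(p)=\pi(xp)$.
3. Since $xp\in xP=0$, this gives $x=\pi(0)=0$.

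This contradicts $x\neq 0$, so no such $\pi$ exists. Equivalently: $A$ is free, so $\pi$ splits and $A$ is a direct summand of $P$. Hence $0\neq x\in\operatorname{Ann}_A(P)\subseteq \operatorname{Ann}_A(A)=0$, which is absurd.

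In this form the argument uses neither the locality of $A$ nor $\dim A=1$. I would remark on this after the proof. I would also explain how the statement fits as a special case of Proposition \ref{mcmno}: in dimension one, an associated prime $P$ of a local ring is either minimal or equal to $\mathfrak m$ (when $\depth A=0$). The cited proposition's Cohen--Macaulay hypotheses presumably handle exactly these two situations. Either way, the annihilator argument above settles the corollary directly.

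There is no genuine obstacle here. The only point needing care is the reading of ``$\twoheadrightarrow_R$'' when $R=A$: the map must be linear over the ring of which $P$ is an ideal, so that $x$ can be moved through $\pi$ in step 2. Since $R=A$, this holds automatically.
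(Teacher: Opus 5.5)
Your proof is correct, and it proves more than is stated. It uses only that $\pi$ is $A$-linear and that $\operatorname{Ann}_A(P)\neq 0$. So it works over any commutative ring, for any $P\in\operatorname{Ass}(A)$ (indeed for any ideal with nonzero annihilator), with no locality or dimension hypothesis.

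The paper organizes the proof differently. It first disposes of the case $P\neq\fm$ by invoking Corollary~\ref{cormcmno}. There $A/P$ is a one-dimensional domain, hence maximal Cohen--Macaulay, and Proposition~\ref{mcmno} rules out an epimorphism onto $A$ by reducing modulo a system of parameters, comparing lengths, and applying Nakayama. Only for the remaining case $P=\fm$ does it run exactly your computation $0=\pi(xP)=x\pi(P)=xA$.

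Your side remark therefore assigns the roles slightly wrongly. Proposition~\ref{mcmno} cannot handle $P=\fm$ at all, since $A/\fm$ has dimension zero and is not maximal Cohen--Macaulay over a ring of positive dimension. That is precisely the case in which the paper uses your argument. Your observation that the annihilator argument never needs $P=\fm$ makes the case split, and the appeal to Cohen--Macaulay theory, superfluous for this statement.

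What the paper's route buys is applicability in the setting of Corollary~\ref{cormcmno}, where $R\to A$ is a genuine extension and $\pi$ is only $R$-linear. There an annihilating element $x\in A$ cannot be pulled through $\pi$, and the length argument is what survives. For $R=A$, your direct argument is the cleaner proof.
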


We present several examples to show that the assumptions are needed and that the results are sharp.

All rings in the sequel are commutative. We consider both noetherian and non-noetherian rings. The term quasilocal denotes a ring with a unique maximal ideal. By local ring we mean a noetherian quasilocal ring. We follow the notation and conventions of \cite{BH,Matsumura} for homological and commutative algebra.

	\maketitle

\section{Embeddings into proper quotients}

Recall from {Question} \ref{Bb} we are inserted  to \(R\)-algebra embedding
\(
A\hookrightarrow A/J.
\)
The phenomenon is closely related to the notion of Hopficity.
Recall that
an $R$-algebra   $A$ is called \emph{Hopfian} if every surjective
	$R$-endomorphism
	\(
	A\to A
	\)
	is injective.

\begin{proposition}\label{prop:hopfian}
	Let \(A\) be an \(R\)-algebra and let \(0\neq J\subseteq A\) be an ideal.
	If
 	$ 	A\cong_R A/J,
	$ 	then \(A\) admits a surjective \(R\)-algebra endomorphism which is not
	injective. Moreover,
 $
	A\not\cong_A A/J
	$ 	as \(A\)-modules.
\end{proposition}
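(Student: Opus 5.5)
The first assertion is obtained by composing the given isomorphism with the canonical projection. Let \(\psi: A/J\to A\) be an \(R\)-algebra isomorphism (the inverse of the given one) and let \(q:A\twoheadrightarrow A/J\) be the quotient map, which is a surjective \(R\)-algebra homomorphism. Put
\[
f:=\psi\circ q:A\to A.
\]
Then \(f\) is an \(R\)-algebra endomorphism, and it is surjective because both \(q\) and \(\psi\) are surjective. Since \(\psi\) is injective, \(\ker f=\ker q=J\), which is nonzero by hypothesis. Hence \(f\) is surjective but not injective, so \(A\) is not Hopfian. This part is routine.

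For the second assertion, suppose for contradiction that \(\theta:A\to A/J\) is an isomorphism of \(A\)-modules. The key step is to compare annihilators, since these are invariant under \(A\)-module isomorphisms:
\[
\Ann_A(A)=\Ann_A(A/J).
\]
On the left, \(\Ann_A(A)=0\), because \(a\cdot 1=a\). On the right, \(\Ann_A(A/J)=J\), since \(a(A/J)=0\) holds exactly when \(a\cdot 1\in J\). We would conclude \(J=0\), contradicting \(J\neq 0\). Alternatively, one can argue directly: set \(\theta(1)=\bar u\); then for \(j\in J\) we have \(\theta(j)=j\bar u=0\), so injectivity of \(\theta\) forces \(J=0\).

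I do not expect a genuine obstacle here. The only point needing care is to keep apart the two structures on \(A/J\). The \(R\)-algebra isomorphism \(A\cong_R A/J\) is compatible only with the \(R\)-action, via the structure maps, and is not \(A\)-linear. The \(A\)-module structure on \(A/J\), by contrast, is the one induced by \(q\). The annihilator argument uses only this latter structure.
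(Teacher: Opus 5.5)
Your proposal is correct and follows essentially the same route as the paper. You compose the inverse isomorphism with the quotient map to get a surjective endomorphism with kernel \(J\neq 0\), and you use that \(J\) annihilates \(A/J\) but nothing nonzero annihilates \(A\), so an \(A\)-linear isomorphism would force \(J=J\cdot 1_A=0\).
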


\begin{proof}
	Let
 $
	\theta:A/J\overset{\cong}{\longrightarrow}_R A
	$ 	be an \(R\)-algebra isomorphism, and let
 $
	q:A\twoheadrightarrow A/J
	$ 	be the canonical quotient map. Then
 $
	\theta\circ q:A\to A
	$ 	is a surjective \(R\)-algebra endomorphism with
 $
	\ker(\theta\circ q)=J\neq0.
	$ 	Thus it is not injective.
 	Finally, suppose that \(A\cong_A A/J\). Since \(J\) annihilates \(A/J\),
	\(A\)-linearity would imply that \(J\) annihilates \(A\). Hence
$
	J=J\cdot1_A=0,
	$	a contradiction.
\end{proof}

We record the following easy observation.

\begin{observation}\label{ffob}
Suppose there is $A'$ between $R\subseteq A$ so that  for some $J'\subseteq A'$
we have $J'A=J$ and that $A'\subseteq A$ is faithfully flat. If   {Question} \ref{Bb}
is valid for $(R,J'\subseteq A')$ then it is valid over $(R,J\subseteq A)$.
\end{observation}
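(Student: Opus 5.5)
The plan is to reduce the vanishing of \(J\) to the vanishing of \(J'\), and then to apply the hypothesis that Question \ref{Bb} holds for \((R,J'\subseteq A')\). The descent half is formal. Since \(A'\subseteq A\) is flat, tensoring \(0\to J'\to A'\) with \(A\) gives an injection \(J'\otimes_{A'}A\to A\) whose image is \(J'A=J\). So \(J\cong J'\otimes_{A'}A\). By faithful flatness, \(J=0\) forces \(J'=0\), and conversely \(J'=0\) gives \(J=J'A=0\). Thus \(J=0\) if and only if \(J'=0\), and it is enough to show \(J'=0\).

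To get \(J'=0\) from the hypothesis, I need an \(R\)-algebra monomorphism \(A'\hookrightarrow_R A'/J'\). The first step is to realize \(A'/J'\) inside \(A/J\). Tensoring the exact sequence \(0\to J'\to A'\to A'/J'\to 0\) with the flat \(A'\)-module \(A\) gives
\[
A'/J'\otimes_{A'}A\cong A/J'A=A/J .
\]
The natural map \(A'/J'\to A/J\) is the base change of \(A'\to A\) along \(A'\to A'/J'\). A base change of a faithfully flat map is again faithfully flat, so \(A'/J'\to A/J\) is injective. We may therefore regard \(A'/J'\) as an \(R\)-subalgebra of \(A/J\).

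Next, given the \(R\)-algebra monomorphism \(\varphi:A\hookrightarrow_R A/J\), I would restrict it to \(A'\). This gives an injective \(R\)-algebra map \(\varphi|_{A'}:A'\hookrightarrow A/J\). If its image lies in the subalgebra \(A'/J'\), then \(\varphi|_{A'}\) is an \(R\)-algebra monomorphism \(A'\hookrightarrow_R A'/J'\). The hypothesis on \((R,J'\subseteq A')\) then yields \(J'=0\), and the first paragraph gives \(J=0\).

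The main obstacle is exactly this factorization: showing \(\varphi(A')\subseteq A'/J'\). For an arbitrary \(\varphi\) there is no reason for it to hold. My first approach would be to read the observation in the intended setting, where \(\varphi\) is compatible with the intermediate ring. Concretely, this means \(\varphi\) is obtained by base change from a map \(A'\to A'/J'\), or more generally that \(\varphi\) carries \(A'\) into the image of \(A'/J'\). In that setting the factorization is immediate.

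Failing that, I would try a descent argument using the Amitsur complex. By faithfully flat descent, \(A'/J'\) is the equalizer of the two maps \(A/J\rightrightarrows (A/J)\otimes_{A'}A\). So it would suffice to check that \(\varphi(a')\) is equalized for each \(a'\in A'\). This is the delicate point, and I would state explicitly whatever compatibility of \(\varphi\) with \(A'\) is needed to make it work.
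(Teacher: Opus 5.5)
\textbf{There is a genuine gap, and it cannot be closed: the statement is false as written.} Your reduction is correct up to the step you flagged. The missing step is to show that $\varphi(A')$ lies in the copy of $A'/J'$ inside $A/J$. No argument, Amitsur descent included, can supply this for an arbitrary $\varphi$.

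Here is a counterexample. Let $R=k$ be a field, $A'=k[u,v]/(u,v)^2$, $J'=vA'$, $A=A'[Z]$ and $J=vA=J'A$.
\begin{itemize}
\item Since $A$ is free over $A'$, the inclusion $A'\subseteq A$ is faithfully flat.
\item We have $\dim_k A'=3>2=\dim_k A'/J'$. So there is no $k$-linear injection $A'\to A'/J'$ at all, and Question \ref{Bb} holds, vacuously, for $(k,J'\subseteq A')$. Lemma \ref{thm:artinian-base} even gives it for every ideal of $A'$.
\item On the other hand, $A=k[Z,u,v]/(u^2,uv,v^2)$ is exactly Bass's Example \ref{bas13} with $x=Z$, and $A/J=k[Z,u]/(u^2)$.
\item The assignment $Z\mapsto Z^2$, $u\mapsto u$, $v\mapsto Zu$ respects the relations. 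It sends the $k$-basis $Z^i,Z^iu,Z^iv$ of $A$ to the distinct basis vectors $Z^{2i},Z^{2i}u,Z^{2i+1}u$ of $A/J$.
\end{itemize}
So it is a $k$-algebra monomorphism $A\hookrightarrow A/J$, yet $J\neq0$. Note that $\varphi(v)=Zu\notin A'/J'$: the factorization fails exactly where you suspected. Since no monomorphism $A'\hookrightarrow A'/J'$ exists, no descent argument can rescue the step.

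The paper gives no proof of this observation. Where it uses it, in the corollary on $A'[Z]/(Z^n)$, it simply asserts that the given monomorphism is the base change $(A'\to A'/J')\otimes_{A'}A$. That silently adds precisely the compatibility hypothesis you proposed. Under that hypothesis, or the weaker one $\varphi(A')\subseteq A'/J'$, your argument is complete and correct:
\begin{itemize}
\item faithful flatness makes $A'/J'\to A/J$ injective;
\item the restriction of $\varphi$ is then an $R$-algebra monomorphism $A'\hookrightarrow A'/J'$, so $J'=0$;
\item hence $J=J'A=0$.
\end{itemize}
So present your write-up as a proof of the observation with that extra hypothesis added to its statement. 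Without the hypothesis, the proposal is not a proof of anything true.
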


\begin{corollary}
		Let \(A'\) be a finitely related reduced \(R\)-algebra and $J'$ be an ideal.
		Set $A=A'[Z]/(Z^n)$ and $J=J'A$.
If	\(
	A\hookrightarrow A/J
	\)
	is an \(R\)-algebra monomorphism, then \(J=0\).
\end{corollary}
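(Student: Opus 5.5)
The plan is to invoke Theorem \ref{bt} for the reduced algebra \(A'\), after passing to reduced quotients. This is how I would make Observation \ref{ffob} concrete here. Note that \(A=A'[Z]/(Z^n)\) is free over \(A'\) with basis \(1,Z,\dots,Z^{n-1}\), hence faithfully flat, and \(J=J'A\). So it suffices to prove \(J'=0\).

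\textbf{Step 1: reduced quotient of \(A/J\).} Since \(J=J'A\), we have
\[
A/J\cong (A'/J')[Z]/(Z^n).
\]
The image of \(Z\) is nilpotent, so the nilradical of \(A/J\) is generated by \(Z\) together with the nilradical of \(A'/J'\). Therefore
\[
(A/J)_{\mathrm{red}}\cong A'/\sqrt{J'}
\]
as \(R\)-algebras.

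\textbf{Step 2: a monomorphism \(A'\hookrightarrow_R A'/\sqrt{J'}\).} Let \(\varphi:A\hookrightarrow A/J\) be the given monomorphism. Consider the composite
\[
\psi:A'\subseteq A\xrightarrow{\varphi}A/J\twoheadrightarrow (A/J)_{\mathrm{red}}\cong A'/\sqrt{J'}.
\]
This is an \(R\)-algebra map. To see it is injective, suppose \(\psi(a)=0\). Then \(\varphi(a)\) is nilpotent, so \(\varphi(a^k)=0\) for some \(k\). Injectivity of \(\varphi\) gives \(a^k=0\) in \(A\), hence in \(A'\subseteq A\). Since \(A'\) is reduced, \(a=0\).

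\textbf{Step 3: conclude.} \(A'\) is a reduced, finitely related (that is, finitely presented) \(R\)-algebra. Applying Theorem \ref{bt} to \(\psi\) and the ideal \(\sqrt{J'}\) gives \(\sqrt{J'}=0\). Hence \(J'=0\), and so \(J=J'A=0\).

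The main point requiring care is Step 2. Restricting \(\varphi\) to \(A'\) only lands in \((A'/J')[Z]/(Z^n)\), not in \(A'/J'\). Reducing modulo nilpotents resolves this, and it costs nothing because \(A'\) is reduced and Theorem \ref{bt} allows an arbitrary ideal, here \(\sqrt{J'}\).
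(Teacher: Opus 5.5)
Your proof is correct, but it differs from the paper's at the key step.

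\textbf{The paper's route.} The paper asserts that the monomorphism $A\hookrightarrow A/J$ is the base change $(A'\to A'/J')\otimes_{A'}A$ along the free, hence faithfully flat, extension $A'\to A$. It then descends injectivity to $A'\to A'/J'$, in the spirit of Observation~\ref{ffob}, and applies Theorem~\ref{bt} to $A'$ and $J'$. That identification presupposes that $\varphi$ is induced from an $R$-algebra map $A'\to A'/J'$, i.e.\ that $\varphi(Z)=Z$ and $\varphi(A')\subseteq A'/J'$. Nothing in the hypotheses forces a hypothetical $\varphi$ to have this form. A general $R$-algebra map may send elements of $A'$ to elements with nonzero $Z$-components, and may send $Z$ to any element whose $n$-th power vanishes.

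\textbf{Your route.} Composing with $A/J\twoheadrightarrow (A/J)_{\mathrm{red}}\cong A'/\sqrt{J'}$ sidesteps this. It works for every $\varphi$. It uses only that $A'\to A$ is injective and that $A'$ is reduced; faithful flatness plays no role, so your opening remark about it can be dropped.

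\textbf{Trade-offs.} Your argument is tailored to $A$ being a nilpotent thickening of $A'$, so that passing to reduced quotients recovers $A'/\sqrt{J'}$. You also apply Theorem~\ref{bt} to $\sqrt{J'}$ rather than $J'$, which is harmless. The descent idea of Observation~\ref{ffob} would cover more general faithfully flat extensions $A'\subseteq A$, but only for maps that actually descend to $A'$.
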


\begin{proof}Recall that $A$ is free over $A'$, because whose base is given by $\{1,Z,\ldots, Z^{n-1}\}.$ It is easy to see that	\(
	A\hookrightarrow A/J
=(A'\to A'/J')\otimes_{A'}A	\).
By faithfully flatness, $A'\to A'/J'$ is injective. By \cite[Main result]{Bass}, $J'=0$. Then by 	
{Observation} \ref{ffob} we get the claim.	
\end{proof}
Before presenting the next result, we present a useful example.

\begin{example}(Warm-up)
	Let $R$ be a domain and set
$
	A=R[X,Y,Z]/(XY,Z^2).
	$ 	Write $x$ and $y$ for the images of $X$ and $Y$ in $A$. 
Let $J=(x-y)$. Then   there is no
	injective $R$-algebra homomorphism
$
	\varphi:A\hookrightarrow_R A/J.
	$\end{example}

\begin{proof} Without loss of generality, we may replace $A$ with
	$
R[X,Y]/(XY)
	$ (see {Observation} \ref{ffob}). We are going to prove the claim without any use of {Theorem} \ref{bt}.
	 Suppose not. So, there is such a morphism $
	\varphi$.
	First observe that, as an $R$-module,
 $
	A=R\oplus xR[x]\oplus yR[y] 
	$	
 with $
	(x)\cap(y)=0.
	$ 	 Then
 $
	\operatorname{Ann}_A(x)=(y),
	\operatorname{Ann}_A(y)=(x).
	$ 	Put
$
	u=\varphi(x),  v=\varphi(y).
$ Since $xy=0$, we have
$
	uv=\varphi(xy)=0.
	$ Since $\varphi$ is injective and $x,y\neq0$, we have
 $
	u\neq0\neq v.
$
Then
$
	A/(x-y)\cong R[x]/(x^2).
	$ Every element of $R[x]/(x^2)$ has a unique expression
$
	a+bx \text { with } a,b\in R.
	$ 	Suppose
$
	(a+bx)(c+dx)=0.
	$ 	Since $x^2=0$, we obtain
$
	ac+(ad+bc)x=0.
$
	Hence
$ac=0$ and $ad+bc=0.$ 	Since $R$ is a domain, $ac=0$ implies
 $	a=0 \text{ or } c=0.$
 If $a=0$, then
 $bc=0.$ 	Thus either $b=0$, in which case $a+bx=0$, or $c=0$.
	Similarly, if $c=0$, then
 $ad=0,$ 	so either $a=0$ or $d=0$.
 Consequently, if both $a+bx$ and $c+dx$ are nonzero, then necessarily
 $	a=c=0.$ 	Thus every pair of nonzero elements with zero product has the form
$bx, dx,$ with $b,d\neq0$. In particular,
 $(bx)^2=(dx)^2=0.	$ Now let
 $u=\varphi(x), v=\varphi(y).$ 	We have $u,v\neq0$ and $uv=0$, so the preceding observation gives
 $	u^2=v^2=0.$ But
 $u^2=\varphi(x^2)$ and $v^2=\varphi(y^2).$ Since
 $x^2\neq0\neq y^2 $ in $A$, this contradicts the injectivity of $\varphi$. Hence
$A\not\hookrightarrow_R A/(x-y).$
\end{proof}

\begin{theorem}
	\label{a}
	Let \(R\) be a  domain, let \(A\) be a reduced finitely generated
	\(R\)-algebra which is torsion-free as an \(R\)-module, and let \(J\) be an
	ideal of \(A\). If there exists an \(R\)-algebra monomorphism
 $
	\varphi:A\hookrightarrow_R A/J,
 $
 	then
	$
	J=0.
	$ \end{theorem}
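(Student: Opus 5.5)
Since $R$ is a domain and $A$ is torsion-free over $R$, the plan is to pass to the fraction field $K=\operatorname{Frac}(R)$, where $A$ embeds into $A_K=A\otimes_R K$. Then $A_K$ is a reduced finitely generated $K$-algebra, hence noetherian. Localization is exact, so $\varphi$ induces a $K$-algebra monomorphism $\varphi_K:A_K\hookrightarrow A_K/J_K$, where $J_K=J\otimes_R K$. Over $K$ we can use dimension and minimal-prime counting; at the end we pull the conclusion back to $A$. For the pull-back, torsion-freeness gives $J\subseteq J_K\cap A$, so $J_K=0$ forces $J=0$.

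\textbf{Step 1: reduction to an injective endomorphism of a noetherian reduced ring.} Put $B=A_K$, $I=J_K$, and let $\psi=\varphi_K:B\hookrightarrow B/I$. Composing with the quotient map $q:B\to B/I$ gives a surjection $q:B\twoheadrightarrow B/I$ and an injection $\psi:B\hookrightarrow B/I$. Both are finitely generated $K$-algebras. Injectivity of $\psi$ gives $\dim B\le \dim (B/I)$, because $B/I$ is integral over $\psi(B)$ if finite; otherwise we use transcendence degree. Concretely, for every minimal prime $\mathfrak q$ of $B/I$, the domain $(B/I)/\mathfrak q$ receives the map from $B/\psi^{-1}(\mathfrak q)$. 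Since $\psi$ is injective and $B$ is reduced, the contractions $\psi^{-1}(\mathfrak q)$ have intersection zero. Hence every minimal prime $\mathfrak p$ of $B$ contains some $\psi^{-1}(\mathfrak q)$, and then $\operatorname{trdeg}_K B/\mathfrak p\le \operatorname{trdeg}_K B/\psi^{-1}(\mathfrak q)\le \operatorname{trdeg}_K (B/I)/\mathfrak q$.

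\textbf{Step 2: a counting invariant forces $I=0$.} For each $d$, let $n_d(C)$ be the number of minimal primes $\mathfrak p$ of $C$ with $\operatorname{trdeg}_K C/\mathfrak p=d$. We compare the multiset of dimensions of the components of $B$ with that of $B/I$.
\begin{itemize}
\item Components of $B/I$ are closed irreducible subsets of $\Spec B$. Each lies in some component of $B$ of dimension at least its own.
\item Step 1 gives an assignment in the reverse direction. Each component $\mathfrak p$ of $B$ is sent to a component $\mathfrak q$ of $B/I$ with $\dim\ge \dim B/\mathfrak p$, chosen so that $\psi^{-1}(\mathfrak q)\subseteq\mathfrak p$.
\end{itemize}
We make this assignment injective by a Hall-type argument, working downward from top dimension. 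In top dimension $D$, consider the components $\mathfrak p$ of $B$ of dimension $D$. A contracted prime $\psi^{-1}(\mathfrak q)$ contained in $\mathfrak p$ must equal $\mathfrak p$ when $\dim (B/I)/\mathfrak q\le D$, since the transcendence degrees agree. So the map $\mathfrak p\mapsto\mathfrak q$ is injective there, giving $n_D(B)\le n_D(B/I)$. The other inclusion of components gives $n_D(B/I)\le n_D(B)$, and a component of $B/I$ of top dimension equals a component $V(\mathfrak p)$ of $B$. Removing these, we induct on lower dimensions. The conclusion is that $V(I)$ contains every irreducible component of $\Spec B$, so $I\subseteq\bigcap\Min(B)=0$ by reducedness.

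\textbf{Main obstacle.} The hardest step is the injectivity and matching bookkeeping in Step 2. Removing top-dimensional components must be compatible with the $\psi$-assignment, because a lower-dimensional component of $B$ could be sent to a component of $B/I$ already used. If this fails, I would instead invoke Theorem~\ref{bt}: $B$ is finitely generated over the field $K$, hence finitely presented, and $\psi$ is a $K$-algebra monomorphism $B\hookrightarrow B/I$, so Bass's theorem gives $I=0$ directly. The only subtlety then is verifying that $\varphi_K$ is $K$-linear and injective, which follows from flatness of $K$ over $R$.
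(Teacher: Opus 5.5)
Your fallback is exactly the paper's proof: base change to $K=\operatorname{Frac}(R)$, flatness to keep $\varphi_K$ injective, Theorem~\ref{bt} applied to the finitely presented reduced $K$-algebra $A_K$, and torsion-freeness to return to $A$. Your primary route through Steps 1--2 is genuinely different, and it does go through. The obstacle you worry about is not real.

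In Step 1, once $\psi^{-1}(\mathfrak q)\subseteq\mathfrak p$ with $\mathfrak p\in\Min(B)$, minimality of $\mathfrak p$ already forces $\psi^{-1}(\mathfrak q)=\mathfrak p$; no transcendence-degree comparison is needed. So you get $\alpha:\Min(B)\to\Min(B/I)$ with $\psi^{-1}(\alpha(\mathfrak p))=\mathfrak p$ and $\dim B/\mathfrak p\le\dim (B/I)/\alpha(\mathfrak p)$. Moreover $\alpha$ is injective on all of $\Min(B)$ at once, since $\alpha(\mathfrak p)=\alpha(\mathfrak p')$ gives $\mathfrak p=\psi^{-1}(\alpha(\mathfrak p))=\mathfrak p'$. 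Hence no component is ever sent to one already used, and no Hall-type matching is needed.

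What you do need to spell out at the lower levels is the reverse inclusion. Regard $\Min(B/I)$ as the primes of $B$ minimal over $I$. Suppose, by downward induction on $d$, that every component of $B$ of dimension $>d$ contains $I$, and that $\alpha$ maps these components bijectively onto the minimal primes over $I$ of dimension $>d$. Let $Q$ be minimal over $I$ with $\dim B/Q=d$, and pick $\mathfrak p\in\Min(B)$ with $\mathfrak p\subseteq Q$. If $\dim B/\mathfrak p>d$, then $I\subseteq\mathfrak p\subsetneq Q$ contradicts minimality of $Q$. So $\dim B/\mathfrak p=d=\dim B/Q$, which forces $Q=\mathfrak p$. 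Thus the minimal primes over $I$ of dimension $d$ form a subset of the components of $B$ of dimension $d$. By injectivity and the inductive hypothesis, $\alpha$ maps the latter set into the former. Both sets are finite, so they coincide, and the induction continues. In the end every minimal prime of $B$ contains $I$, and $I=0$ by reducedness.

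Compared with the paper, your argument is self-contained. It reproves the field case of Bass's theorem using three facts:
\begin{enumerate}
\item[(a)] dimension equals transcendence degree for affine $K$-domains, so $\dim B'\le\dim C'$ for any injection $B'\hookrightarrow C'$ of such domains;
\item[(b)] minimal primes are finite in number;
\item[(c)] $B$ is reduced.
\end{enumerate}
The paper instead quotes Theorem~\ref{bt} as a black box. The price is that your route genuinely needs a field as base. Over a domain, an injective map of finitely generated algebras can drop dimension: over $R=\mathbb Z_{(p)}$, the inclusion $R\hookrightarrow R[1/p]=\mathbb Q$ goes from dimension $1$ to $0$. So the passage to $K$, which in the paper only serves to make Theorem~\ref{bt} applicable, becomes essential in your argument.
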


\begin{proof}
	Let \(K=\operatorname{Frac}(R)\). Since \(A\) is torsion-free over
	the domain \(R\), the canonical map
	
	$$
	A\longrightarrow K\otimes_R A=:A_K
	$$
	
	is injective. Hence \(J=0\) if and only if \(J_K:=K\otimes_R J=0\).
	Since \(K\) is flat over \(R\), tensoring \(\varphi\) with \(K\) gives
	a \(K\)-algebra monomorphism
	
	$$
	\varphi_K:A_K\longrightarrow A_K/J_K.
	$$
	
	Moreover, \(A_K\) is a finitely generated reduced \(K\)-algebra.
	Indeed, \(A_K\) is obtained from \(A\) by base change to the field
	\(K\), and reducedness is preserved under field extensions.
	Since \(K\) is a field, every finitely generated \(K\)-algebra is
	finitely presented. Thus \(A_K\) is a finitely presented reduced
	\(K\)-algebra. Bass' theorem (see {Theorem} \ref{bt})
	therefore applies to
$\varphi_K:A_K\to A_K/J_K,
	$	
	and yields
$J_K=0.	$	Finally, \(J\) is an \(R\)-submodule of the torsion-free \(R\)-module
	\(A\), and hence \(J\) is torsion-free. Therefore
$K\otimes_RJ=0
	$	implies \(J=0\).
\end{proof}

\begin{corollary}\label{cor:domain}
	Let \(R\) be a  ring and let \(A\) be a finitely generated
	\(R\)-algebra which is a domain. If \(J\subseteq A\) is an ideal and there
	exists an \(R\)-algebra monomorphism
 $
	A\to_R A/J,
	$
	then
 $J=0.
	$ 	
\end{corollary}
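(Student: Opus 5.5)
Since $A$ is a domain, it is reduced, so the plan is to reduce to Theorem \ref{a}. That theorem needs a domain base over which $A$ is torsion-free. We manufacture such a base by replacing $R$ with its image in $A$.

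Let $\mathfrak q=\ker(R\to A)$ and put $R'=R/\mathfrak q$. Then $R'$ embeds in the domain $A$, so $R'$ is a domain. The algebra $A$ is still finitely generated over $R'$, by the same generators. Also, $A$ is torsion-free as an $R'$-module: if $r\in R'$ is nonzero and $ra=0$ in $A$, then since $r$ is a nonzero element of the domain $A$, we get $a=0$.

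Next we check that $\varphi:A\to A/J$ is an $R'$-algebra map. Both $A$ and $A/J$ are annihilated by $\mathfrak q$: for $A/J$ this holds because its $R$-structure factors through $A$. So every $R$-algebra map between them is automatically $R'$-linear. Thus $\varphi$ is an $R'$-algebra monomorphism.

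We can now apply Theorem \ref{a} with base ring $R'$. Its hypotheses are that $R'$ is a domain and $A$ is reduced, finitely generated and torsion-free over $R'$. These are exactly what we verified, so the theorem gives $J=0$. The only step needing care is the torsion-freeness over $R'$, and it is immediate from $R'\subseteq A$ with $A$ a domain.
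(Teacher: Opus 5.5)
Your proposal is correct and matches the paper's proof: pass to the domain $R/\ker(R\to A)$, observe that $\varphi$ is an algebra map over it, and apply Theorem~\ref{a}. You also explicitly verify that $A$ is torsion-free over the new base, a hypothesis of Theorem~\ref{a} that the paper's proof leaves unchecked.
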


\begin{proof}
	Let
 $
	\rho:R\to A
	$ 	be the structure homomorphism and put
 $
	\mathfrak p=\ker(\rho).
	$ 	Since \(A\) is a domain, \(\mathfrak p\) is prime. Set
 $
	R_0=R/\mathfrak p.
	$ 	Then \(R_0\) is a domain and the structure map factors as
 $
	R\to R_0\hookrightarrow A.
	$ 	Since the given map is an \(R\)-algebra homomorphism, it is also an
	\(R_0\)-algebra homomorphism. Moreover, \(A\) is a finitely generated
	\(R_0\)-algebra and is a domain. Hence Theorem~\ref{a}
	applies and gives \(J=0\).
\end{proof}

Artianian rings are far from be reduced.  This suggests the next result:
	
	\begin{lemma}
		\label{thm:artinian-base}
		Let $R$ be an artinian  ring and let $A$ be a finite
		$R$-algebra. If $J$ is an ideal of $A$ and there exists an
		$R$-algebra monomorphism
		\(
		\varphi:A\hookrightarrow_RA/J,
		\)
		then
		\(
		J=0.
		\)
	In particular, the conclusion holds without assuming that $A$ is
		reduced.
\end{lemma}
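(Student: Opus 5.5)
The natural approach is a length count. Since $R$ is artinian and $A$ is a finite $R$-algebra, $A$ is a finitely generated module over an artinian ring, so it has finite length $\ell_R(A)<\infty$. The same holds for every $R$-submodule and every quotient of $A$; in particular $\ell_R(J)$ and $\ell_R(A/J)$ are finite. The short exact sequence of $R$-modules
\[
0\longrightarrow J\longrightarrow A\longrightarrow A/J\longrightarrow 0
\]
and additivity of length give
\[
\ell_R(A)=\ell_R(J)+\ell_R(A/J).
\]

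Next we use the monomorphism. The map $\varphi:A\hookrightarrow_R A/J$ is in particular an injective $R$-linear map, so $A$ is isomorphic to the $R$-submodule $\varphi(A)$ of $A/J$. Hence
\[
\ell_R(A)=\ell_R(\varphi(A))\le \ell_R(A/J).
\]
Combining this with the additivity formula yields $\ell_R(J)\le 0$. Therefore $\ell_R(J)=0$, which means $J=0$.

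The only point needing care is the finiteness of length. A finite $R$-algebra is a finitely generated $R$-module, and over an artinian ring finitely generated modules are both noetherian and artinian, hence of finite length. Every later step is formal, so I expect no real obstacle. Note that the multiplicative structure of $\varphi$ and reducedness play no role, which explains the remark that the conclusion holds without assuming $A$ reduced. The same argument even shows that any injective $R$-linear map $A\to A/J$ forces $J=0$. Incidentally, since equality holds, $\varphi$ is also surjective, hence an isomorphism.
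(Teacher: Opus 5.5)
Your proof is correct and is essentially the paper's argument: $A$ has finite length as a finitely generated module over the artinian ring $R$, injectivity of $\varphi$ gives $\ell_R(A)\le\ell_R(A/J)$, and additivity of length on $0\to J\to A\to A/J\to 0$ forces $\ell_R(J)=0$. Your side remarks are also correct: only the $R$-linearity of $\varphi$ is used, and $\varphi$ is in fact an isomorphism.
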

	
\begin{proof} Since $R$ is artinian and $A$ is finite as an $R$-module, $A$ is an
Artinian $R$-module. In particular, $A$ has finite length as an
$R$-module. The quotient $A/J$ also has finite length.
Since $\varphi$ is an $R$-algebra monomorphism, it is in particular an
injective homomorphism of $R$-modules. Hence
\(	\ell_R(A)\leq\ell_R(A/J),	\) with the convenience that $\ell(-)$ means the length function.
	On the other hand, the canonical exact sequence
\[0\longrightarrow J\longrightarrow A\longrightarrow A/J
\longrightarrow0
\]
	gives	\(\ell_R(A)	=\ell_R(J)+\ell_R(A/J).\)
Consequently,
\(\ell_R(A/J)\leq\ell_R(A).\)
Combining this  with the previous  inequality  
	\(	\ell_R(A)\leq\ell_R(A/J)\) yields
\(\ell_R(A)=\ell_R(A/J),
\) and so
\(	\ell_R(J)=0.\)
Therefore
\(	J=0.\)
\end{proof}

Here, we relate the problem to co-Hopfianity,  indeed, this  is a natural extension from artinian rings to zero-dimensional rings (not necessarily noetherian):
\begin{lemma}
	\label{lem:zero-dimensional-base}
	Let $R$ be a  ring such that every prime ideal of $R$
	is maximal. Let $A$ be a finite $R$-algebra. Suppose $J$ is an ideal of
	$A$ and there exists an $R$-algebra monomorphism
$A\hookrightarrow_R A/J.
$
If $J=(0:a)	$
	then
$J=0.
	$
	\end{lemma}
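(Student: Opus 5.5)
The plan is to turn the hypothesis $J=(0:a)$ into an injective $R$-linear endomorphism of $A$ whose image lies in $aA$. I would then show that, over a zero-dimensional base, such an injective endomorphism of a finite module is surjective. This co-Hopfian property gives $aA=A$, hence $a$ is a unit and $J=(0:a)=0$.

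\emph{Step 1 (reformulation).} Multiplication by $a$ gives an $A$-linear surjection $A\to aA$ with kernel $(0:a)=J$. Hence it induces an $A$-linear, in particular $R$-linear, isomorphism $\iota:A/J\xrightarrow{\cong} aA$, $\bar x\mapsto ax$. Composing the given monomorphism $\varphi:A\hookrightarrow_R A/J$ with $\iota$ and the inclusion $aA\subseteq A$ yields an injective $R$-linear map $g=\iota\circ\varphi:A\to A$ with $g(A)\subseteq aA$. Only the $R$-module structure of $\varphi$ is used here, not its multiplicativity.

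\emph{Step 2 (co-Hopficity).} Let $M=A$, which is a finitely generated $R$-module, and let $S\subseteq \operatorname{End}_R(M)$ be the commutative $R$-subalgebra generated by $g$. By the Cayley--Hamilton (determinant) trick, $g$ satisfies a monic polynomial over $R$, so $S$ is a finite, hence integral, extension of the image of $R$. Since every prime of $R$ is maximal, $\dim S=0$ by lying over and incomparability.

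A zero-dimensional commutative ring is strongly $\pi$-regular. That is, for $f\in S$ there exist $n\ge1$ and $s\in S$ with $f^n=f^{2n}s$. I would justify this by checking it locally: in each $S_{\mathfrak p}$ the element $f$ is either a unit or nilpotent, since $\mathfrak pS_{\mathfrak p}$ is the nilradical. The standard argument then glues these local statements, and I will cite it as a known fact. Applying it to $f=g$ gives $g^n(1-g^ns)=0$ on $M$. Since $g^n$ is injective, $(1-g^ns)M=0$, so $M=g^n s M\subseteq g(M)$. Thus $g$ is surjective.

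\emph{Step 3 (conclusion).} From Step 2, $A=g(A)\subseteq aA$, so $a$ is a unit of $A$ and $J=(0:a)=0$.

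The main obstacle is Step 2, specifically the strong $\pi$-regularity of the possibly non-noetherian zero-dimensional ring $S$. Lemma~\ref{thm:artinian-base} relied on length, which is unavailable here. If gluing the local statements proves awkward, I would instead work directly with the idempotent $e=g^ns$ when $n$ can be chosen uniformly. As a fallback, I would reduce to the quasilocal case. There, the maximal ideal of $S_{\mathfrak p}$ is nil, and $g$ is either invertible (so surjective) or nilpotent. A nilpotent $g$ contradicts injectivity unless $M_{\mathfrak p}=0$. Surjectivity then follows locally at every prime, hence globally.
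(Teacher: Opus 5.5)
Your proposal is correct and follows the paper's route. You identify $A/J=A/(0:a)\cong aA\subseteq A$ to obtain an injective $R$-endomorphism of $A$ with image in $aA$, use co-Hopficity of finite modules over a zero-dimensional base to get $aA=A$, and conclude that $a$ is a unit, so $(0:a)=0$. The only difference is that the paper cites the main result of Vasconcelos \cite{V} for the co-Hopfian step, while you prove it directly via Cayley--Hamilton and strong $\pi$-regularity of the zero-dimensional ring $R[g]$; this argument is valid and is essentially the standard proof of that theorem.
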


\begin{proof}
First recall that  $A$ is finite as an $R$-module, and recall that
$$
A\hookrightarrow_R A/J=A/(0:_Aa)=aA\subseteq A,
	$$
	is invective.
By  the main result of \cite{V}	this map should be surjective. This implies that $aA=A$, i.e., $a$ is invariable, 
	we conclude from this that $J=(0:a)=0$.
\end{proof}

\begin{proposition}\label{pr}
	Let $R$ be a  ring, let $A$ be a finitely generated
	projective $R$-module, and let $J$ be an $R$-submodule of $A$.
	Suppose that there exists an injective $R$-linear map
$
	\varphi:A\hookrightarrow_R A/J.
	$
	If every injective endomorphism of $A$ is an automorphism, then
$J=0.
	$ \end{proposition}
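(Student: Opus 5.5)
The plan is to use projectivity of $A$ to lift $\varphi$ through the quotient map. Once $\varphi$ is lifted to an endomorphism of $A$, the hypothesis on injective endomorphisms turns that lift into an automorphism, and then the quotient map itself is forced to be injective.

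First I would let $q:A\twoheadrightarrow A/J$ be the canonical $R$-linear surjection. Since $A$ is a projective $R$-module, the $R$-linear map $\varphi:A\to A/J$ lifts along $q$. That is, there is an $R$-linear map $\psi:A\to A$ with
\[
q\circ\psi=\varphi .
\]
Next I would check that $\psi$ is injective. If $\psi(a)=0$, then $\varphi(a)=q(\psi(a))=0$, so $a=0$ because $\varphi$ is injective. Thus $\psi$ is an injective $R$-endomorphism of $A$, and the hypothesis of Proposition~\ref{pr} makes $\psi$ an automorphism of $A$.

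With $\psi^{-1}$ available, I would write
\[
q=\varphi\circ\psi^{-1}.
\]
This is a composition of two injective maps, so $q$ is injective. Hence $J=\ker q=0$, which is the claim.

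I expect no real obstacle here. The only point needing care is that the lifting uses only that $A$ is projective and $q$ is surjective. In particular, neither finite generation nor the fact that $J$ is an ideal is used, only that $J$ is an $R$-submodule; finite generation enters only in ensuring the hypothesis on injective endomorphisms is reasonable, for instance over zero-dimensional rings via \cite{V}. I would remark on this after the proof.
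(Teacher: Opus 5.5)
Your proof is correct and follows the paper's argument: you lift $\varphi$ through the quotient map using projectivity of $A$, note that the lift is injective, and so conclude by hypothesis that it is an automorphism. The only cosmetic difference is the final step: the paper shows $g^{-1}(J)=\ker(\pi\circ g)=\ker\varphi=0$ and then uses surjectivity of $g$, whereas you write $q=\varphi\circ\psi^{-1}$ directly; the two are equivalent.
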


\begin{proof}
	Let
$
	\pi:A\twoheadrightarrow A/J
	$	be the canonical surjection. Since $A$ is projective as an
	$R$-module, the map $\varphi:A\hookrightarrow_R A/J$ lifts along
	$\pi$. Thus there exists an $R$-linear map
$	g:A\to A$	
	such that
$\pi\circ g=\varphi$:
\[
\begin{tikzcd}
	&& A \arrow[dl, "\exists g"] \arrow[d, "\varphi"]   \\
	0 \longrightarrow J\arrow[r] & A \arrow[r, "\pi"'] & A/J \arrow[r] & 0
\end{tikzcd}
\]
	We claim that $g$ is injective. Indeed, if $g(a)=0$, then
$
	\varphi(a)=(\pi\circ g)(a)=0.
	$	Since $\varphi$ is injective, it follows that $a=0$. Hence $g$ is
	injective.
	By hypothesis, every injective endomorphism of $A$ is an
	automorphism. Therefore $g$ is an automorphism of $A$. 	Since $\varphi$ is injective, we have
	\(
	\ker(\varphi)=0.
	\)
	Since
	\[
	0=\ker(\varphi)
	=\ker(\pi\circ g)
	=g^{-1}(\ker\pi)
	=g^{-1}(J),
	\]
	we obtain
	\(
	g^{-1}(J)=0.
	\)
	Since $g$ is surjective, for every $j\in J$ there exists
	$a\in A$ such that $g(a)=j$. But then
	\(
	a\in g^{-1}(J)=0,
	\)
	and hence $j=g(a)=0$. Therefore
	\(
	J=0.
	\)
\end{proof}

\begin{corollary}
	\label{corh}
Let $R$ be zero-dimensional. Let $A$ be a finite $R$-algebra projective as module. Suppose  for some ideal $J$  of
$A$  there exists an $R$-algebra monomorphism
$A\hookrightarrow_R A/J.
$
Then
$J=0.$
\end{corollary}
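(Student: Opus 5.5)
Corollary \ref{corh} follows from Proposition \ref{pr} once we show that every injective $R$-linear endomorphism of $A$ is an automorphism. The $R$-algebra monomorphism $\varphi:A\hookrightarrow_R A/J$ is in particular an injective $R$-linear map. Also, $A$ is finitely generated projective over $R$ and $J$ is an $R$-submodule of $A$. So the hypotheses of Proposition \ref{pr} hold, apart from the co-Hopfian property of $A$, and it then gives $J=0$.

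To prove that $A$ is co-Hopfian as an $R$-module, first reduce to the local case. Let $g:A\to A$ be injective $R$-linear, and let $C=\coker(g)$. Since $C$ is a finitely generated $R$-module, it is enough to show $C_\fm=0$ for every maximal ideal $\fm$ of $R$. Localization is exact, so $g_\fm:A_\fm\to A_\fm$ is still injective.

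Over the quasilocal ring $R_\fm$, the finitely generated projective module $A_\fm$ is free, say $A_\fm\cong R_\fm^n$. Since $R$ is zero-dimensional, $R_\fm$ is zero-dimensional quasilocal, so $\fm R_\fm$ is its only prime and hence its nilradical. Thus every element of $\fm R_\fm$ is nilpotent. Now $g_\fm$ is given by an $n\times n$ matrix $T$ over $R_\fm$. By McCoy's theorem, injectivity of $T$ means $\det T$ is not a zero-divisor. In a zero-dimensional quasilocal ring every element is either a unit or nilpotent, and a nonzero ring has no nilpotent nonzerodivisors. Hence $\det T$ is a unit, so $g_\fm$ is surjective and $C_\fm=0$. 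It follows that $C=0$ and $g$ is an automorphism.

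Alternatively, one can cite the main result of \cite{V}, as in Lemma \ref{lem:zero-dimensional-base}: an injective endomorphism of a finite module over a zero-dimensional ring is surjective. That gives the co-Hopfian property directly.

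The main obstacle is this co-Hopfian step in the non-noetherian setting, where length arguments such as the one in Lemma \ref{thm:artinian-base} are not available. The approach above avoids length by localizing and applying McCoy's theorem, which needs only projectivity and zero-dimensionality. Note that projectivity is used twice: in the lifting step inside Proposition \ref{pr}, and to make $A_\fm$ free so that determinants make sense.
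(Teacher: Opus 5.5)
Your proposal is correct and follows the paper's route: the paper likewise proves the corollary by applying Proposition~\ref{pr}, using that every injective $R$-linear endomorphism of $A$ is an automorphism, which it simply asserts (implicitly via \cite{V}). Your localization-plus-McCoy argument is a valid, self-contained justification of that step; the paper leaves it unproved.
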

\begin{proof}
	Since \(A\) is finite and projective over the zero-dimensional ring \(R\),
	every injective \(R\)-linear endomorphism of \(A\) is an automorphism.
	Hence Proposition~\ref{pr} applies.
\end{proof}

It is easy to construct rings with nilpotent elements satisfying Serre’s condition $(S_n)$.

\begin{proposition}\label{xpr}
	Let $(R,\fm)\to (A,\fm_A)$ be an extension of local rings so that $A$ is finitely generated
	extension and   $(S_2)$  as an $R$-module. Let $J\in \Spec(A)^\circ:=\Spec(A)\setminus\{\fm_A\}$.
	Suppose that there exists an injective $R$-linear map
	$
	\varphi:A\hookrightarrow_R A/J.
	$
	Then
	$J=0.
	$ \end{proposition}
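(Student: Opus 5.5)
The plan is to compare associated primes over $R$ and then reduce to a finite-length situation by localizing, where the length argument of Lemma~\ref{thm:artinian-base} applies verbatim to $R$-linear maps. I expect it to use neither the $(S_2)$ hypothesis nor $J\neq\fm_A$ in an essential way; they only make the picture more rigid.

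\emph{Step 1: compute $\operatorname{Ass}_R(A/J)$.} Put $\mathfrak p:=J\cap R$, which is prime since $J$ is. The ring $A/J$ is a domain and is finite over the domain $R/\mathfrak p\hookrightarrow A/J$. Any nonzero $r\in R\setminus\mathfrak p$ maps to a nonzero element of the domain $A/J$, so it is a nonzerodivisor on $A/J$. Hence
\[
\operatorname{Ass}_R(A/J)=\{\mathfrak p\}.
\]

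\emph{Step 2: transport along $\varphi$.} Since $\varphi:A\hookrightarrow_R A/J$ is an injective $R$-linear map, we get
\[
\emptyset\neq\operatorname{Ass}_R(A)\subseteq\operatorname{Ass}_R(A/J)=\{\mathfrak p\},
\]
so $\operatorname{Ass}_R(A)=\{\mathfrak p\}$. Consequently $\mathfrak p$ is the unique minimal element of $\operatorname{Supp}_R(A)$. Therefore $A_{\mathfrak p}$ is a finitely generated module over $R_{\mathfrak p}$ supported only at the maximal ideal, so $\ell_{R_{\mathfrak p}}(A_{\mathfrak p})<\infty$. The same holds for $(A/J)_{\mathfrak p}$, being a quotient of $A_{\mathfrak p}$.

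\emph{Step 3: the length argument after localizing at $\mathfrak p$.} Localization is exact, so $\varphi_{\mathfrak p}:A_{\mathfrak p}\hookrightarrow (A/J)_{\mathfrak p}$ is injective. Exactly as in the proof of Lemma~\ref{thm:artinian-base}, we have
\[
\ell(A_{\mathfrak p})\le\ell((A/J)_{\mathfrak p})=\ell(A_{\mathfrak p})-\ell(J_{\mathfrak p}).
\]
Hence $J_{\mathfrak p}=0$. On the other hand, $J\subseteq A$ gives $\operatorname{Ass}_R(J)\subseteq\operatorname{Ass}_R(A)=\{\mathfrak p\}$. If $J\neq0$, then $\mathfrak p\in\operatorname{Ass}_R(J)$, and so $J_{\mathfrak p}\neq0$, a contradiction. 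Thus $J=0$.

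The main point to check carefully is Step 1, namely that $A/J$ is torsion-free over $R/\mathfrak p$; this is where primality of $J$ enters. If I am overlooking a subtlety, the fallback is to use that $A$ is $(S_2)$, hence has no embedded associated primes over $R$, together with incomparability for the finite extension $R\to A$. From this one gets that $J$ is a minimal prime of $A$ lying over $\mathfrak p$, and then the localized length count finishes the argument as above.
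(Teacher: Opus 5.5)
Your proof is correct, and it takes a genuinely different route from the paper's.

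The paper inducts on $d=\dim R$. The base case is Lemma~\ref{thm:artinian-base}, and the case $d=1$ is handled by localizing at the (then minimal) prime $J$. For $d\geq 2$ it argues that $C=\coker(\varphi)$ has finite length. It then uses $H^0_{\fm}(A/J)=0$, which is where $J\neq\fm_A$ enters via $\depth(A/J)>0$, and $H^1_{\fm}(A)=0$, which is where $(S_2)$ enters. Together these force $C=0$, i.e.\ $A\cong_R A/J$, and the paper concludes because a surjective endomorphism of a finitely generated module is injective.

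You instead localize once, at the contraction $\mathfrak p=J\cap R$. Primality of $J$ gives $\operatorname{Ass}_R(A/J)=\{\mathfrak p\}$. Indeed $\Ann_R(\bar a)=\mathfrak p$ for every $0\neq\bar a\in A/J$, so Step~1 needs no fallback. The embedding then forces $\operatorname{Ass}_R(A)=\{\mathfrak p\}$, and the length count of Lemma~\ref{thm:artinian-base} over $R_{\mathfrak p}$ kills $J_{\mathfrak p}$, hence $J$.

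Your route is shorter and strictly more general. It shows that $(S_2)$, the condition $J\neq\fm_A$, and even locality are superfluous: you use only that $R$ is noetherian, $A$ is module-finite over $R$, and $J$ is prime. It also never needs any information about $C$. This matters, because the paper's intermediate claim that $C$ has finite length (and hence vanishes when $d\geq 2$) does not hold in general. For $R=A=k[[x,y]]$, $J=0$, and $\varphi$ multiplication by $x$, all hypotheses are satisfied but $C\cong k[[y]]$.

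The paper's approach fits the local-cohomology techniques it reuses elsewhere (compare Propositions~\ref{op} and~\ref{lc}). For this statement, though, your associated-prime argument is the more robust one.
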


\begin{proof}
	We proceed by induction on $d:=\dim(R)$ to observe $J=0$.
	First assume that $ d=0$. According to Lemma 
	\ref{thm:artinian-base}, $J=0$. We apply the inductive hypothesis, and observe that $J$ is zero over punctured spectrum of $R$.
	It turns out that $C:=\coker(\varphi)$ is of finite length.
	
	Here,  we separate the case $d=1$. This  implies that $J\in\min(A)$.
	Set $P=J\cap R$. It turns out that $
	\varphi_J:A_J\hookrightarrow_{R_P} A_J/JA_J.
	$ Recall that $\dim(R_P)=0$. Again, we apply the  case $d=0$, and deduce that $JA_J=0$. Consequently, $J=0$. 
	
	Then we may and do assume that $d>1$. By the independence theorem for local cohomology modules, we have $H^0_\fm(A/J)\cong_{}H^0_{\fm_A}(A/J)$, and this is zero because $J$ is a prime ideal and $J\neq \fm_A$, which imply that  $\depth(A/J)>0$. Similarly, $H^1_\fm(A)=0$ by  the
	$(S_2)$ condition applied along with $d>1$, and with the convenience that $\dim(R)=\dim(A)$, as any finite extension is integral (see \cite[9.1~(a)]{Matsumura}). 
	Consider the exact sequence
	$$0\longrightarrow A
	\xrightarrow{\ \varphi\ }
	A/J\longrightarrow C\longrightarrow0,$$and deduce the exact sequence
	$$0= H^0_{\mathfrak m }(A/J)\longrightarrow H^0_{\mathfrak m }(C)\longrightarrow H^1_{\mathfrak m }(A)=0.$$	Also, $H^0_\fm(C)\cong_{R}C$, as $C$ is of finite length as an $R$-module.
	Thus $C=0$. Here, we show $J=0$. Suppose on the way of contradiction that $J\neq 0$. Since $C=0$ we have $ 	A\cong_R A/J 
	$. This allows us to apply 	{Proposition} \ref{prop:hopfian} which say \(A\) admits a surjective \(R\)-algebra endomorphism which is not
	injective. Since $A$ is assumed to be finitely generated as an $R$-module, \cite[Theorem 2.4]{Matsumura} gives us a contradiction that we searched for it.  Then $J=0$.
\end{proof}

	Cohen-Macaulay rings are natural generalization of artinian ring. In this regard we present:
	
\begin{proposition}
	\label{thm:CM-reduction}
Let $(R,\mathfrak m)$ be a noetherian local  ring Let $A$ be a finite $R$-algebra so that there exists an
	$R$-algebra monomorphism
	$\varphi:A\hookrightarrow_RA/J  $ with $C:=\coker(\varphi)$.
If  
$C$ is maximal Cohen--Macaulay over $R$, then
$J=0.$ 
	\end{proposition}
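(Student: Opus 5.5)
The plan is to reduce modulo a system of parameters of $R$ that is regular on $C$, land in the artinian situation of Lemma~\ref{thm:artinian-base}, and lift back with Nakayama plus the Hopfian property of finite modules. Set $d=\dim R$. If $d=0$, Lemma~\ref{thm:artinian-base} applies directly, so assume $d>0$. Since $C$ is maximal Cohen--Macaulay, there is a system of parameters $\underline{x}=x_1,\dots,x_d$ of $R$ that is a $C$-regular sequence. Note that $C$ is a finitely generated $R$-module, being a quotient of $A/J$, which is finite over $R$.

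First, tensor the exact sequence
\[
0\longrightarrow A\xrightarrow{\ \varphi\ } A/J\longrightarrow C\longrightarrow 0
\]
with $\bar R:=R/(\underline{x})$. Because $\underline{x}$ is $C$-regular, the Koszul complex shows $\operatorname{Tor}_1^R(\bar R,C)=0$. Hence the sequence stays exact:
\[
0\longrightarrow A/\underline{x}A\xrightarrow{\ \bar\varphi\ } (A/J)/\underline{x}(A/J)\longrightarrow C/\underline{x}C\longrightarrow 0 .
\]
Put $\bar A=A/\underline{x}A$ and $\bar J=(J+\underline{x}A)/\underline{x}A$. Then $(A/J)/\underline{x}(A/J)\cong \bar A/\bar J$ as rings. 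Moreover, $\bar\varphi$ is the ring map induced by $\varphi$, since $\varphi$ is $R$-linear and so sends $\underline{x}A$ into $\underline{x}(A/J)$. Thus $\bar\varphi:\bar A\hookrightarrow \bar A/\bar J$ is an $\bar R$-algebra monomorphism, with $\bar R$ artinian and $\bar A$ finite over $\bar R$. Checking this exactness and the identification is the step I expect to need the most care; the Tor vanishing is exactly where the MCM hypothesis is used.

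Next, run the length count from the proof of Lemma~\ref{thm:artinian-base} on the reduced sequence. It gives
\[
\ell(\bar A)=\ell(\bar A/\bar J)=\ell(\bar A)+\ell(C/\underline{x}C),
\]
so $C/\underline{x}C=0$. It also gives $\bar J=0$, i.e. $J\subseteq \underline{x}A$. Since $C$ is finitely generated and $\underline{x}\subseteq\mathfrak m$, Nakayama's lemma gives $C=0$. Hence $\varphi$ is an $R$-algebra isomorphism $A\cong_R A/J$.

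Finally, if $J\neq0$, Proposition~\ref{prop:hopfian} produces a surjective, non-injective $R$-endomorphism $\theta\circ q$ of $A$. This contradicts \cite[Theorem 2.4]{Matsumura}, since $A$ is a finitely generated $R$-module, exactly as at the end of the proof of Proposition~\ref{xpr}. Therefore $J=0$.

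As an alternative matching the induction-on-$\dim R$ strategy mentioned in the introduction, one could replace $\underline{x}$ by its powers $\underline{x}^n$, which are still $C$-regular. This gives $J\subseteq \underline{x}^nA\subseteq \mathfrak m^nA$ for all $n$. Krull's intersection theorem, applied to the finite $R$-module $A$, then forces $J=0$.
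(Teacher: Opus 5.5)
Your proof is correct. Its skeleton is the paper's: reduce modulo a $C$-regular system of parameters of $R$ and run the length count of Lemma~\ref{thm:artinian-base}. The difference is the endgame.

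The paper takes only $J\subseteq \underline{x}A$ from the reduction. It repeats this for every $\underline{x}^n$ and concludes with Krull's intersection theorem, which is exactly your closing alternative.

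Your main route instead reads off the other half of the length equality, $\ell(C/\underline{x}C)=0$. Nakayama then gives $C=0$, and you finish with Proposition~\ref{prop:hopfian} and the Hopfian property of finite modules, as at the end of Propositions~\ref{xpr} and~\ref{op}.

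Your route yields more information. It shows that the cokernel of such a $\varphi$ can never be a nonzero maximal Cohen--Macaulay module. Under the usual convention that MCM modules are nonzero, the hypothesis of the proposition is therefore never satisfied. In any case the statement collapses to the case $C=0$, i.e.\ to Vasconcelos' theorem. The paper's route avoids Vasconcelos' theorem, but it needs the reduction for all powers $\underline{x}^n$ and does not reveal this degeneracy.

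One small precision point: $R$ is not assumed Cohen--Macaulay, so $\underline{x}$ need not be $R$-regular, and the Koszul complex on $\underline{x}$ need not resolve $\bar R$. The vanishing of $\operatorname{Tor}_1^R(\bar R,C)$ still holds, because it is a quotient of $H_1(\underline{x};C)=0$. More simply, you can reduce modulo $x_1,\dots,x_d$ one element at a time using the snake lemma.
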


	\begin{proof}
We argue by induction on $d:=\dim(R)$.
If $d=0$, Lemma \ref{thm:artinian-base} implies that $J=0$.
Assume now that $d>0$.
We may choose
a parameter ideal	$ \underline{x}:=x_1,\cdots, x_d\subseteq \mathfrak m $
which is $C$-regular.
	Consider the exact sequence
$$0\longrightarrow A
		\xrightarrow{\ \varphi\ }
		A/J\longrightarrow C\longrightarrow0\quad(\ast) $$ 	We obtain by tensoring
	$(\ast)$	with $R/(\underline{x})$, 
	the following exact sequence
 $$\operatorname{Tor}_1^R(R/(x),C)\longrightarrow A/(\underline{x})
\longrightarrow
 (A/J)\otimes_R  R/(\underline{x}) \longrightarrow C\otimes_R R/(\underline{x}) \longrightarrow0. $$ 
		But, $\operatorname{Tor}_1^R(R/(\underline{x}),C)=0
		$ because $\underline{x}$ is $C$-regular (see \cite[Ex. 1.1.12 (a)]{BH}).
	 	Thus
	 $$
		A/\underline{x}A\hookrightarrow (A/J)/\underline{x}(A/J)=(A/\underline{x})/(J+\underline{x})(A/\underline{x})
		$$ 	is an $R/(\underline{x})$-algebra monomorphism.
	 	By induction,
	 $
		J+\underline{x}A=\underline{x}A.
		$ 	Hence
 $
		J\subseteq \underline{x}A.
		$ Working with $\underline{x}^n:=x_1^n,\cdots, x_d ^n$ gives
  $
 J\subseteq \underline{x}^nA.
 $
 Consequently,   
   $$
 J\subseteq\bigcap_{n>0} \underline{x}^nA \subseteq\bigcap_{n>0} (\underline{x}A)^n.
 $$  So by Krull's intersection theorem, we deduce that
 $
		J=0.
		$ \end{proof}

The finiteness assumption in {Proposition}~\ref{thm:CM-reduction} is  needed, even if  $C$ is big Cohen-Macaulay:

\begin{example}\label{bas13}
Let    $R$ be  local ring, $x$ is a variable. Let \(M=R[x]u\oplus R[x]v\) be a free \(R[x]\)-module with basis \(\{u,v\}\). Consider the trivial extension \(A:=R[x]\ltimes M\) and let \(J:=Av\). Then \(A/J=R[x]\oplus R[x]u\) with \(u^2=0\). Define \(\varphi:A\to A/J\) to be the \(R\)-algebra homomorphism such that \(\varphi(x)=x^2\), \(\varphi(u)=u\), and \(\varphi(v)=xu\). By \cite[Example  1.3]{Bass} we see \(\varphi(A)= R[x^2]\oplus R[x]u \)
and \(\varphi(A)\) is a direct sum, hence \(\varphi\) is injective. Now, consider the exact sequence
$$0\longrightarrow A
\xrightarrow{\ \varphi\ }
A/J\longrightarrow C\longrightarrow0. $$ It follows that $C=R[x]/R[x^2]\cong_R\bigoplus Rx^{odd}$  is free as an $R$-module, and so  $C$ is a big Cohen-Macaulay over $R$. 
\end{example}

In opposite direction, we have a situation when $C=0$.

\begin{proposition}\label{op}
Let $(R,\fm )$ be local non  Cohen-Macaulay and  $A$ be an $R$-algebra finite and  over $R$ so that there is $\varphi:	A\hookrightarrow_R A/J$. If 	$C:=\coker(\varphi)$ is of finite projective dimension, then
$C=0.$ 	In particular, $J=0$.
\end{proposition}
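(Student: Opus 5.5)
The plan is to argue by contradiction, assuming $C\neq0$, and to play the finiteness of $\pd_R C$ against the hypothesis that $R$ is not Cohen--Macaulay. The tools are the Auslander--Buchsbaum formula and the New Intersection Theorem (Roberts): over a local ring, a nonzero finitely generated module of finite length and finite projective dimension forces the ring to be Cohen--Macaulay. The whole argument therefore rests on showing that $C$ has finite length. That step I have not established in general, and I flag it below as the gap in this plan.

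\emph{Step 1 (vanishing at minimal primes).} Start with the exact sequence $0\to A\xrightarrow{\varphi} A/J\to C\to0$ of finite $R$-modules. Localize it at a minimal prime $\mathfrak p$ of $R$. Then $R_{\mathfrak p}$ is artinian and $A_{\mathfrak p}$ is a finite $R_{\mathfrak p}$-algebra, and $\varphi_{\mathfrak p}:A_{\mathfrak p}\hookrightarrow (A/J)_{\mathfrak p}$ is still an $R_{\mathfrak p}$-algebra monomorphism. The length comparison in Lemma~\ref{thm:artinian-base} gives $\ell(A_{\mathfrak p})=\ell((A/J)_{\mathfrak p})$, so $C_{\mathfrak p}=0$. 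Hence $\operatorname{Supp}(C)$ avoids $\Min(R)$, and $\dim C<\dim R$.

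\emph{Step 2 (reduction to finite length) --- the main obstacle, not resolved here.} We would like $C_{\mathfrak p}=0$ for every $\mathfrak p\neq\fm$. The first idea is to take $\mathfrak p$ minimal in $\operatorname{Supp}(C)$ with $\mathfrak p\neq\fm$. Then $C_{\mathfrak p}$ is a nonzero module of finite length and finite projective dimension over $R_{\mathfrak p}$. By the New Intersection Theorem, $R_{\mathfrak p}$ is Cohen--Macaulay, and by Auslander--Buchsbaum $\pd C_{\mathfrak p}=\dim R_{\mathfrak p}$.

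This produces no contradiction by itself, because a non-Cohen--Macaulay $R$ can have Cohen--Macaulay localizations. The gap must be closed by a separate argument over the Cohen--Macaulay local ring $R_{\mathfrak p}$, for which I only have candidate routes:
\begin{itemize}
\item Use the minimal free resolution of $C_{\mathfrak p}$, of length $\dim R_{\mathfrak p}$, to show that $A_{\mathfrak p}$ and $(A/J)_{\mathfrak p}$ have the same multiplicity. Then try to conclude $C_{\mathfrak p}=0$ from $(A/J)_{\mathfrak p}$ being a quotient of $A_{\mathfrak p}$.
\item Alternatively, induct on $\dim R$ in the style of Proposition~\ref{thm:CM-reduction}.
\end{itemize}
Until this step is proved, what follows establishes the statement only under the additional assumption that $C$ has finite length, or equivalently that $C_{\mathfrak p}=0$ for all $\mathfrak p\neq\fm$.

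\emph{Step 3 (conclusion, assuming Step 2).} If $C\neq0$ has finite length and $\pd_R C<\infty$, the New Intersection Theorem makes $R$ Cohen--Macaulay, contradicting the hypothesis. Hence $C=0$, so $\varphi$ is an $R$-isomorphism $A\cong_R A/J$.

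For the final claim, suppose $J\neq0$. Proposition~\ref{prop:hopfian} then yields a surjective, non-injective $R$-endomorphism of the finite $R$-module $A$. This is impossible, since a surjective endomorphism of a finitely generated module is injective \cite[Theorem 2.4]{Matsumura}, exactly as at the end of Proposition~\ref{xpr}. Therefore $J=0$, conditional on Step 2.
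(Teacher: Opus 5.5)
\textbf{Step 2 is a genuine gap, and it cannot be closed, because the assertion $C=0$ is false as stated.} The paper's own proof does not close it either.

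\emph{Counterexample.} Let $(R,\fm)$ be any non-Cohen--Macaulay local ring of positive depth, for instance $R=k[[a,b,c,d]]/\bigl((a,b)\cap(c,d)\bigr)$. Let $x\in\fm$ be a nonzerodivisor; here $x=a+c$ works. Put $A=R[y]/(y^2)=R\oplus Ry$ and $J=0$, and let $\varphi:A\to A$ be the $R$-algebra map with $\varphi(y)=xy$. Then $\varphi(r+sy)=r+sxy$ is injective because $x$ is regular. Its cokernel is $C=\coker(\varphi)\cong R/xR\neq 0$, and $\pd_R C=1$.

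This is exactly the obstruction you identified. $C$ vanishes at $\Min(R)$, as your Step 1 predicts. But take a minimal prime $\mathfrak p\neq\fm$ of $xR$. Then $R_{\mathfrak p}$ is a one-dimensional Cohen--Macaulay ring and $C_{\mathfrak p}=R_{\mathfrak p}/xR_{\mathfrak p}\neq 0$. So neither a multiplicity comparison nor an induction in the style of Proposition~\ref{thm:CM-reduction} can yield $C_{\mathfrak p}=0$.

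The paper's proof has the same hole. It gets ``$C$ is zero over the punctured spectrum'' from the inductive hypothesis, but being non-Cohen--Macaulay does not pass to $R_{\mathfrak p}$. Moreover, its base case $\dim R=0$ is vacuous, since artinian rings are Cohen--Macaulay. Your Step 3 is correct, but only under the extra assumption that $C$ has finite length. If one insists on $J\neq0$, the hypothesis of the statement is never met, by the next paragraph.

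\textbf{What survives is the claim $J=0$, and you should not derive it from $C=0$.} It holds for every finite algebra over a noetherian ring, with no Cohen--Macaulay or projective-dimension hypothesis. Indeed, a finitely generated module over a noetherian ring never embeds $R$-linearly into a proper quotient of itself. The argument runs as follows.
\begin{enumerate}
\item Suppose $J\neq0$. Since $J$ is a finitely generated $R$-module, choose a minimal element $\mathfrak p$ of $\operatorname{Supp}_R(J)$. Set $\mathfrak q=\mathfrak pR_{\mathfrak p}$ and $\Gamma=H^0_{\mathfrak q}(-)$.
\item Then $J_{\mathfrak p}\neq0$ has finite length, so $J_{\mathfrak p}\subseteq\Gamma(A_{\mathfrak p})$.
\item Also $\Gamma(A_{\mathfrak p}/J_{\mathfrak p})=\Gamma(A_{\mathfrak p})/J_{\mathfrak p}$: if $\mathfrak q^s m\subseteq J_{\mathfrak p}$ and $\mathfrak q^t J_{\mathfrak p}=0$, then $\mathfrak q^{s+t}m=0$.
\item The injection $\varphi_{\mathfrak p}$ carries $\Gamma(A_{\mathfrak p})$ injectively into $\Gamma(A_{\mathfrak p})/J_{\mathfrak p}$. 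Both are modules of finite length, so comparing lengths forces $\ell(J_{\mathfrak p})=0$, a contradiction.
\end{enumerate}
In other words, your Step 1 length argument does the job once you localize at a minimal prime of $\operatorname{Supp}_R(J)$ rather than of $R$, and pass to $\mathfrak q$-torsion.
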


\begin{proof}
		We proceed by induction on $\dim(R)$ to observe $C=0$.
First assume that $\operatorname{dim}(R)=0$. According to Lemma 
\ref{thm:artinian-base}, $C=J=0$. We apply the inductive hypothesis, and observe that $C$ is zero over punctured spectrum.
It turns out that $C$ is of finite length. Suppose on the way of contradiction that $C$  is nonzero. This implies that $R$ is Cohen-Macaulay (see \cite[Ex. 9.4.9]{BH}).
But this excluded by the assumption. So, $C=0$. 
Here, we show $J=0$. Suppose on the way of contradiction that $J\neq 0$. Since $C=0$ we have $ 	A\cong_R A/J 
$. This allows us to apply 	{Proposition} \ref{prop:hopfian} which say \(A\) admits a surjective \(R\)-algebra endomorphism which is not
injective. Since $A$ is assumed to be finitely generated as an $R$-module, \cite[Theorem 2.4]{Matsumura} gives us a contradiction that we searched for it.  Then $J=0$.
\end{proof}

\begin{remark}
\begin{enumerate}
\item[(i)]  The finiteness assumption in {Proposition}~\ref{op} is  needed, it is enough to start 
{Example} \ref{bas13} with any non   Cohen-Macaulay ring $R$. Recall that we observed in 	{Example} \ref{bas13} that $C$  were be free, and so be of finite projective dimension.
	
\item[(ii)] Recall that the existence of a nonzero  $\fm$-torsion module of finite projective dimension implies the ring is Cohen-Macaulay. This turns out that the   finiteness assumption in
the artinan situation (see Lemma 
\ref{thm:artinian-base}) is needed too, as otherwise {Proposition}~\ref{op} combined with part (i) leads a contradiction.
\end{enumerate}

\end{remark}

	\maketitle

\section{Question 1.1 is true}

In this section we show:

\begin{theorem}\label{thm:answer}
	Let $R$ be a commutative ring, let $A$ be a reduced finitely generated
	$R$-algebra, and let $J$ be an ideal of $A$. If there is an injective
	$R$-algebra homomorphism
	\(
	\varphi:A\hookrightarrow_R  A/J,
	\)
	then $J=0$.
\end{theorem}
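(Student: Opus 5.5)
Since Bass's Theorem~\ref{bt} already settles the finitely presented case, we reduce from finitely generated to finitely presented by a noetherian descent over a finitely generated $\mathbb Z$-subalgebra of $R$.

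\textbf{Step 1: descent.} Write $A=R[X_1,\dots,X_n]/I$ and let $x_i$ be the images of the $X_i$. Choose polynomials $f_i\in R[X]$ representing $\varphi(x_i)$ modulo $J$. Let $R_0\subseteq R$ be the $\mathbb Z$-subalgebra generated by the coefficients of the $f_i$. Put $I_0:=I\cap R_0[X]$ and $A_0:=R_0[X]/I_0\subseteq A$.

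Then $A_0$ is reduced, being a subring of $A$. It is finitely presented over $R_0$, since $R_0$ is noetherian, so $I_0$ is finitely generated. Put $J_0:=J\cap A_0$.

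\textbf{Step 2: check that $\varphi$ descends.} Define $\varphi_0:A_0\to A_0/J_0$ by $x_i\mapsto f_i$. We must verify that this is well defined, i.e.\ that $g(f_1,\dots,f_n)\in J_0+I_0$ for every $g\in I_0$. This is the delicate point. Since $\varphi$ is an $R$-algebra map, $g(f)\in I+\tilde J$, where $\tilde J$ is the preimage of $J$ in $R[X]$. Also $g(f)\in R_0[X]$, so $g(f)\in R_0[X]\cap(I+\tilde J)$, and this is exactly the preimage of $J_0$ in $R_0[X]$. So $\varphi_0$ is well defined.

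Injectivity is the other point. If $a\in A_0$ satisfies $\varphi_0(a)=0$ in $A_0/J_0$, then $\varphi(a)=0$ in $A/J$, because $A_0/J_0\hookrightarrow A/J$ by the definition $J_0=J\cap A_0$. Hence $a=0$ by injectivity of $\varphi$. So $\varphi_0:A_0\hookrightarrow A_0/J_0$ is an $R_0$-algebra monomorphism.

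\textbf{Step 3: apply Bass and enlarge $R_0$.} Theorem~\ref{bt} applied over $R_0$ gives $J_0=0$, i.e.\ $J\cap A_0=0$. To conclude $J=0$, take any $j\in J$ with preimage $h\in R[X]$. Enlarge $R_0$ by the coefficients of $h$ and redo Steps 1--2 with the larger ring $R_0'$; the argument works verbatim for any finitely generated $R_0'\supseteq R_0$. The image of $h$ then lies in $A_0'$, so $j\in J\cap A_0'=0$.

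\textbf{Main obstacle.} The crux is Step 2, well-definedness and injectivity after descent. It succeeds precisely because we define $I_0$ and $J_0$ as intersections rather than as descended generators. Reducedness passes to $A_0$ for free as a subring, and finite presentation comes from noetherianity of $R_0$; these are exactly Bass's hypotheses.
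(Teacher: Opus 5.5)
Your proposal is correct and follows essentially the same route as the paper: you descend to the image $A_0$ of $R_0[X]$ in $A$, over a finitely generated (hence noetherian) subring $R_0\subseteq R$. There $\varphi$ restricts to an injective $R_0$-algebra map $A_0\to A_0/(J\cap A_0)$, and Bass's finitely presented theorem applies. The differences are only cosmetic: the paper puts the coefficients of a representative of $j$ into $R_0$ from the start rather than enlarging afterwards, and it checks well-definedness via $\varphi(B)\subseteq\pi(B)\cong B/L$ rather than on polynomials.
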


\begin{proof}
	Let $\iota:R\to A$ be the structure map and let
	$\pi:A\to A/J$ be the quotient map. Choose $a_1,\ldots,a_n\in A$
	such that $A=\iota(R)[a_1,\ldots,a_n]$. For each $i$, choose
	$f_i\in R[X_1,\ldots,X_n]$ with
	\[
	\varphi(a_i)=\pi\bigl(f_i(a_1,\ldots,a_n)\bigr).
	\]
	Evaluation of polynomials over $R$ uses the structure map $\iota$.
	
	Fix $j\in J$. Choose $g\in R[X_1,\ldots,X_n]$ such that
	$j=g(a_1,\ldots,a_n)$. Let $c_1,\ldots,c_m\in R$ comprise all coefficients
	occurring in $f_1,\ldots,f_n,g$, and let $R_0\subseteq R$ be the subring
	generated by $1_R,c_1,\ldots,c_m$. There is a surjective ring homomorphism
	\[
	\mathbb Z[T_1,\ldots,T_m]\longrightarrow R_0,
	\qquad T_\ell\longmapsto c_\ell.
	\]
	Thus $R_0$ is noetherian.
	Set
	\[
	B=\iota(R_0)[a_1,\ldots,a_n]\subseteq A,
	\qquad L=B\cap J.
	\]
	The ring $B$ is reduced because it is a subring of $A$. It is a finitely
	generated $R_0$-algebra. The kernel of the natural surjection
	$R_0[X_1,\ldots,X_n]\to B$ is finitely generated, since its
	source is noetherian. Hence $B$ is finitely presented over $R_0$.
	The coefficients of $g$ belong to $R_0$, so $j\in L$.
	
	For $r\in R_0$, $R$-algebra compatibility gives
	$\varphi(\iota(r))=\pi(\iota(r))$. The coefficients of the $f_i$ also belong to
	$R_0$. It follows that
	\[
	\varphi(B)\subseteq \pi(B).
	\]
	The restriction $\pi|_B$ has kernel $L$ and induces an $R_0$-algebra isomorphism
	\[
	\theta:B/L\xrightarrow{\ \sim\ }\pi(B),
	\qquad b+L\longmapsto q(b).
	\]
	Consequently
	\[
	\psi=\theta^{-1}\circ\varphi|_B:B\longrightarrow B/L
	\]
	is an injective $R_0$-algebra homomorphism. Indeed, $\varphi|_B$ is injective
	and $\theta$ is an isomorphism. Bass's theorem (see  {Theorem} \ref{bt}), applied to $B$ and $L$, yields
	$L=0$. Therefore $j=0\in B\subseteq A$. Since $j\in J$ was arbitrary, $J=0$.
\end{proof}

\begin{remark}[Characteristic]
	The homomorphism $\mathbb Z\to R_0$ sends $k$ to
	$k\cdot1_{R_0}$ and need not be injective. In characteristic $n>0$, the
	polynomial-ring surjection above factors through
	$(\mathbb Z/n\mathbb Z)[T_1,\ldots,T_m]$; for prime $n=p$, this is
	$\mathbb F_p[T_1,\ldots,T_m]$. Thus the proof applies in every
	characteristic. The structure map $R_0\to B$ may also have a kernel.
\end{remark}

\section{Toward module version}
 The preceding results suggest that the essential obstruction to embedding a module
 into a proper quotient is controlled by the zero-divisors of the base ring. We
 begin with the following observation.

\begin{proposition}\label{31}
	Let \(R\) be a noetherian reduced ring, \(J \triangleleft R\) an ideal, and \(M\) a finitely generated torsion-free \(R\)-module.
	If \(M \hookrightarrow_R (M/JM)\), then \(J \subseteq \mathfrak{p}\) for some \(\mathfrak{p} \in \operatorname{Min}(R)\). Assume in addition  \(M\) is
	faithful. Then $J=0$.
\end{proposition}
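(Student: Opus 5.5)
Suppose, for a contradiction, that $J\not\subseteq\mathfrak p$ for every $\mathfrak p\in\Min(R)$. The plan is to derive that $M$ is torsion, which contradicts torsion-freeness unless $M=0$. Since $R$ is noetherian, $\Min(R)=\{\mathfrak p_1,\dots,\mathfrak p_s\}$ is finite. Prime avoidance then yields an element $x\in J$ lying outside $\bigcup_i\mathfrak p_i$.

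Because $R$ is reduced, the set of zero-divisors of $R$ equals $\bigcup_i\mathfrak p_i$. So $x$ is a non-zero-divisor of $R$.

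Now $x$ annihilates $M/JM$, since $xM\subseteq JM$. For any $m\in M$, injectivity of $\varphi:M\hookrightarrow M/JM$ gives
\[
\varphi(xm)=x\varphi(m)=0,
\]
hence $xm=0$. As $M$ is torsion-free and $x$ is a non-zero-divisor, we get $m=0$. Thus $M=0$.

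In that degenerate case the hypothesis carries no content, so I would read the statement as tacitly assuming $M\neq0$ (for $M=0$ the first claim could fail, e.g. $J=R$). In the faithful case this is automatic unless $R=0$. This contradiction gives $J\subseteq\mathfrak p$ for some $\mathfrak p\in\Min(R)$.

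For the faithful case I would localize the same argument at each minimal prime. Fix $\mathfrak p\in\Min(R)$. Since $R$ is reduced, $R_{\mathfrak p}$ is a field. Localizing is exact, so we get an injection
\[
M_{\mathfrak p}\hookrightarrow M_{\mathfrak p}/J_{\mathfrak p}M_{\mathfrak p}.
\]
If $J\not\subseteq\mathfrak p$, then $J_{\mathfrak p}=R_{\mathfrak p}$ and the target is zero, so $M_{\mathfrak p}=0$.

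On the other hand, faithfulness of the finitely generated module $M$ gives $\operatorname{Supp}(M)=V(\Ann M)=\Spec R$, so $M_{\mathfrak p}\neq0$ for every minimal prime $\mathfrak p$. Hence $J\subseteq\mathfrak p$ for every $\mathfrak p\in\Min(R)$. Therefore $J\subseteq\bigcap_{\mathfrak p\in\Min(R)}\mathfrak p=\operatorname{nil}(R)=0$, using reducedness.

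The main point to get right is this second step: the first assertion only produces one minimal prime containing $J$, and faithfulness is what upgrades it to all of them via the support computation. The remaining step needing care is the identification of the zero-divisors with the union of the minimal primes, which is where both hypotheses enter. Noetherianity makes $\operatorname{Ass}(R)$ finite, and reducedness makes $\operatorname{Ass}(R)=\Min(R)$.
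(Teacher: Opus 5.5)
Your proof is correct. For the first assertion it is essentially the paper's argument, run contrapositively. The paper first derives $JM=0$: every $j\in J$ kills $M/JM$, so $\varphi(jm)=j\varphi(m)=0$. It then notes that a regular $j\in J$ would contradict torsion-freeness (using $M\neq0$, exactly as you flag), so $J\subseteq\operatorname{Zd}(R)=\bigcup_{\mathfrak p\in\Min(R)}\mathfrak p$, and it finishes with prime avoidance. You instead apply prime avoidance first to pick a regular $x\in J$ and then run the same annihilation computation.

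For the faithful case your route genuinely differs. You localize at each minimal prime and use $\operatorname{Supp}(M)=V(\Ann M)=\Spec R$ to get $J\subseteq\mathfrak p$ for every $\mathfrak p\in\Min(R)$. You then conclude from $\bigcap_{\mathfrak p\in\Min(R)}\mathfrak p=\operatorname{nil}(R)=0$. This is valid, but the paper's argument is one line. Your computation $\varphi(xm)=x\varphi(m)=0$ never used that $x$ is regular, so it applies to every element of $J$ and gives $JM=0$. Faithfulness then forces $J\subseteq\Ann_R(M)=0$.

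The paper's route shows the faithful conclusion needs none of the hypotheses (noetherian, reduced, torsion-free, finitely generated). Yours uses finite generation for the support computation and reducedness for the nilradical step. So your closing remark, that faithfulness must ``upgrade'' one minimal prime to all of them, overstates the difficulty of this step.
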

\begin{proof}
	Let
	$
	\varphi:M\longrightarrow M/JM
	$	be an $R$-linear injection. For every $j\in J$ and $m\in M$, we have
	$
	jm\in JM,
	$ and hence
	$
	j\varphi(m)=\varphi(jm)=0
	$ in $M/JM$. Since $\varphi$ is injective, it follows that
	$
	jm=0
	$	for every $m\in M$. Therefore
	$
	JM=0.
	$	We claim that every element of $J$ is a zero-divisor of $R$. Indeed, suppose that
	$j\in J$ is regular. Since $M$ is torsion-free, multiplication by $j$ on $M$ is injective. But $jM=0$, and $M\neq0$, a contradiction. Hence
	$
	J\subseteq \operatorname{Zd}(R).
	$ Since $R$ is noetherian and reduced,
	$\operatorname{Zd}(R)
	=
	\bigcup_{\mathfrak p\in\operatorname{Min}(R)}\mathfrak p.
	$ 	Moreover, $\operatorname{Min}(R)$ is finite. Consequently,
	$$
	J\subseteq
	\bigcup_{\mathfrak p\in\operatorname{Ass}(R)}\mathfrak p=\bigcup_{\mathfrak p\in\operatorname{Min}(R)}\mathfrak p.
	$$ By prime avoidance, there exists
	$
	\mathfrak p\in\operatorname{Min}(R)
	$
	such that
	$
	J\subseteq\mathfrak p.
	$ 
	
	Assume now  \(M\) is
	faithful. Then
	\[
	[M\lhook\joinrel\longrightarrow_R M/JM
	\qquad\Longrightarrow\qquad
	J=0].
	\]
The first paragraph gives \(JM=0\). Since \(M\) is faithful,
	\(\Ann_R(M)=0\), and hence \(J=0\).	
\end{proof}

\begin{remark}
	The faithfulness assumption is important since \(J := \operatorname{ann}(M)\). Then \(M \cong M/ JM = M/0 = M\).
\end{remark}
	
The preceding proposition uses reducedness only through the description of the
zero-divisors as the union of the minimal primes. We next give a different
argument which, in dimension one, does not require \(A\) to be reduced.
	\begin{proposition}\label{lc}
		Let $R\to A$ be local and finite morphism of noetherian local rings where $\dim(A)=1$ so that there exists an
		$R$-algebra monomorphism
		$\varphi:A\hookrightarrow_R A/J  $. Then \(J \subseteq \mathfrak{p}\) for some \(\mathfrak{p} \in \operatorname{Min}(A)\).	
		
	\end{proposition}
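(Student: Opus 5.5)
The plan is to argue by contradiction, using a length count over $R$ and no reducedness assumption. Since $\dim(A)=1$ and $A$ is local, every prime of $A$ is either minimal or equal to $\mathfrak m_A$. So if $J$ lies in no $\mathfrak p\in\operatorname{Min}(A)$, the only primes containing $J$ are $\mathfrak m_A$ (when $J\neq A$). In other words, either $J=A$ or $J$ is $\mathfrak m_A$-primary.

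\emph{The case $J=A$.} Here $A/J=0$. Then $\varphi$ would be an injection of the nonzero ring $A$ into $0$, which is impossible.

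\emph{The case $J$ is $\mathfrak m_A$-primary.} Then $A/J$ is an $A$-module of finite length. I will transfer this to $R$. Since $R\to A$ is a finite local homomorphism, $A/\mathfrak m_A$ is a finite-dimensional vector space over $R/\mathfrak m$. Every composition factor of $A/J$ over $A$ is $A/\mathfrak m_A$, so $\ell_R(A/J)<\infty$.

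In the language of the section, this is the statement $H^0_{\mathfrak m}(A/J)=A/J$. One can get it by the independence theorem, $H^0_{\mathfrak m}(A/J)\cong H^0_{\mathfrak m_A}(A/J)$, using that $\sqrt{\mathfrak mA}=\mathfrak m_A$ for a finite local map.

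Now $\varphi$ is in particular an injective $R$-linear map $A\hookrightarrow A/J$. Submodules of finite-length modules have finite length, so $\ell_R(A)<\infty$; equivalently, $A=H^0_{\mathfrak m}(A)$. Since the map is finite, this gives $\ell_A(A)<\infty$, so $A$ is artinian and $\dim(A)=0$. This contradicts $\dim(A)=1$. Hence $J\subseteq\mathfrak p$ for some $\mathfrak p\in\operatorname{Min}(A)$.

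The only step needing care, which I would check first, is the passage between finite length over $A$ and over $R$. Both directions rest on $\mathfrak m A$ being $\mathfrak m_A$-primary, which holds because $A/\mathfrak mA$ is a finite $R/\mathfrak m$-algebra and hence artinian, and local with maximal ideal $\mathfrak m_A$.

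Note that $A$ need not be reduced. The argument uses neither zero-divisor theory nor Lemma~\ref{thm:artinian-base}; it uses only the dimension hypothesis.
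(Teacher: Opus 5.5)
Your proof is correct, and it takes a different and more elementary route than the paper.

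\textbf{The paper's route.} The paper works in cohomological degree one.
\begin{enumerate}
\item Grothendieck vanishing gives $H^1_{\mathfrak m_A}(A/J)=0$, and the independence theorem transfers this to $H^1_{\mathfrak m}(A/J)=0$.
\item The long exact sequence of $0\to A\to A/J\to C\to 0$ then exhibits $H^1_{\mathfrak m}(A)$ as a quotient of the finitely generated module $H^0_{\mathfrak m}(C)$.
\item The contradiction comes from the nontrivial fact that the top local cohomology $H^{\dim A}_{\mathfrak m_A}(A)$ is never finitely generated when $\dim A>0$.
\end{enumerate}

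\textbf{Your route.} You work in degree zero instead. Since $A/J$ has finite length, and a submodule of a finite-length module has finite length, $A$ is artinian. This needs neither Grothendieck vanishing nor the non-finiteness of top local cohomology.

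The step you flagged is easier than you suggest. The inequality $\ell_A(A)\le\ell_R(A)$ is automatic, because ideals of $A$ are $R$-submodules. The other direction uses only $\mathfrak mA\subseteq\mathfrak m_A$ and the finiteness of $A$ over $R$. You do not need $\mathfrak mA$ to be $\mathfrak m_A$-primary.

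\textbf{What each approach buys.} Your idea removes the hypothesis $\dim A=1$. An $R$-linear injection $A\hookrightarrow A/J$ gives $\operatorname{Supp}_R(A)\subseteq\operatorname{Supp}_R(A/J)$. For a finite map this yields $\dim A/J=\dim A$. Hence $J$ lies in a minimal prime $\mathfrak p$ with $\dim A/\mathfrak p=\dim A$.

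The paper's argument, by contrast, uses $\dim A=1$ essentially. It needs the connecting term $H^{\dim A-1}_{\mathfrak m}(C)=H^0_{\mathfrak m}(C)$ to be finitely generated. What it buys is the link to local cohomology that this section advertises.
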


	\begin{proof}
		Suppose not. By $\Ht(J)>0$, and so $J$ is $\mathfrak m_A$-primary. Then
		by Grothendieck's vanishing theorem, $H^1_{\mathfrak m_A}(A/J)=0$.
	It is easy to see $\rad(\mathfrak m_RA)=\mathfrak m_A$. 	By the independent theorem for local cohomology modules,
		$H^i_{\mathfrak m_A}(-)\cong_RH^i_{\mathfrak m_R}(-)$. In particular,
		$H^1_{\mathfrak m_R}(A/J)=0$. Now, we use the natural short exact sequence of $R$-modules $0 \to A\to A/J\to  C\to 0,$ to deduce the exact sequence
		$$ H^0_{\mathfrak m_R}(C)\longrightarrow H^1_{\mathfrak m_R}(A)\longrightarrow H^1_{\mathfrak m_R}(A/J)=0.$$Now, recall from finiteness of $R\to A$ that  $A/J$ and so $C$ is finitely generated over $R$. Consequently, $ H^0_{\mathfrak m_R}(C)$ is finitely generated. Then $H^1_{\mathfrak m_R}(A)$ is finitely generated as an $R$-module.
		Therefore, $H^{\dim(A)}_{\mathfrak m_A}(A)$ is finitely generated as an $A$-module.
		This contradiction shows that \(J \subseteq \mathfrak{p}\) for some \(\mathfrak{p} \in \operatorname{Min}(A)\).
	\end{proof}
 
\begin{question}
Is in Proposition  \ref{31} the noetherian assumption important?
\end{question}
	
The answer is affirmative.

\begin{example}\label{E3.6}
	Let \(k\) be a field and set
$	R=
	k[X_1,X_2,\ldots]\big/\bigl(X_iX_j\mid i\neq j\bigr).$
For each \(n\geq1\), let
$
	\mathfrak p_n=(x_i\mid i\neq n).
$
	Then
$
	R/\mathfrak p_n\cong k[x_n],
$
	so each \(\mathfrak p_n\) is prime. Moreover,
\(R\) is reduced and
$
	\Min(R)=\{\mathfrak p_n\mid n\geq1\}.
$ Let
	\[
	J=(x_1,x_2,\ldots)
	\qquad\text{and}\qquad
	M=R/J.
	\]
	Since \(JM=0\), the canonical map
$
	M\to M/JM
$
	is an isomorphism. We claim that \(M\) is torsion-free.
Every element of \(R\) can be written in the form
\[
	r=c+f_1(x_1)+\cdots+f_t(x_t),
	\]
	where \(c\in k\) and \(f_i(x_i)\in(x_i)\). If \(c=0\), then, for any
	\(n>t\),
$
	r\in\mathfrak p_n,
$	and hence \(r\) is a zero-divisor. Consequently, every regular element of \(R\)
	has nonzero constant term.
Now
$M=R/J\cong k.$
	If \(r\in R\) is regular, its action on \(M\) is multiplication by its nonzero
	constant term. Hence multiplication by \(r\) on \(M\) is injective. Thus \(M\)
	is torsion-free.
	However, for every \(n\geq1\),
	\(
	x_n\in J	\)
 {but} 
	\(	x_n\notin\mathfrak p_n.
	\)
	Therefore
$
	J\not\subseteq\mathfrak p_n
$
	for every $(n\geq1)$. Since the \(\mathfrak p_n\) are precisely the minimal
	primes of \(R\), we obtain
$J\not\subseteq\mathfrak p$
 for every $\mathfrak p\in\Min(R).$ 
	Thus
$
	M\lhook\joinrel\rightarrow_R M/JM
	$
	does not imply the conclusion of Proposition~\ref{31} when \(R\) is not
	noetherian.
\end{example}
		But, we can reduce the  finitely generated of $M$ to only has rank (see \cite[\S 1.4]{BH}).

\begin{corollary}
Let $R$ be a noetherian reduced ring, let $J\triangleleft R$ be an ideal, and let $M\neq 0$ be a torsion-free $R$-module and has rank. If
		$
		M\hookrightarrow_R M/JM,
		$ then
		$
		J\subseteq\mathfrak p
		$ 	for some $\mathfrak p\in\operatorname{Min}(R)$.
	\end{corollary}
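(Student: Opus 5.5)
The plan is to follow the first paragraph of the proof of Proposition~\ref{31} almost verbatim, since that argument never used finite generation. Only the final prime-avoidance step needs the rank hypothesis, and it will instead be replaced by a localization at the total ring of fractions. Let $\varphi:M\hookrightarrow_R M/JM$ be the given $R$-linear injection.

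\textbf{Step 1: $JM=0$.} For $j\in J$ and $m\in M$ we have $j\varphi(m)=\varphi(jm)=0$ in $M/JM$. Injectivity of $\varphi$ then gives $jm=0$, so $JM=0$.

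\textbf{Step 2: localize at the minimal primes.} Let $Q$ be the total ring of fractions of $R$. Since $R$ is noetherian and reduced, $\operatorname{Min}(R)=\{\mathfrak p_1,\dots,\mathfrak p_s\}$ is finite and
\[
Q\cong\prod_{i=1}^s R_{\mathfrak p_i},
\]
where each $R_{\mathfrak p_i}$ is a field. By definition, $M$ having rank $r$ means $M\otimes_RQ\cong Q^r$, i.e.\ $M_{\mathfrak p_i}\cong R_{\mathfrak p_i}^r$ for every $i$.

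\textbf{Step 3: $r>0$.} This is the only point needing care, and the torsion-free hypothesis handles it. Since $M$ is torsion-free, the map $M\to M\otimes_RQ$ is injective: an element killed in $Q\otimes M$ is killed by some regular element, hence is zero. Because $M\neq0$, this forces $M\otimes_RQ\neq0$, so $r\ge1$. Consequently $M_{\mathfrak p}\neq0$ for every $\mathfrak p\in\operatorname{Min}(R)$; here we use that the rank is the same at all minimal primes, which is what "has rank" provides.

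\textbf{Step 4: conclude.} Fix any $\mathfrak p\in\operatorname{Min}(R)$. From $JM=0$ we get $J_{\mathfrak p}M_{\mathfrak p}=0$. But $M_{\mathfrak p}\cong R_{\mathfrak p}^r$ is a nonzero free module over the field $R_{\mathfrak p}$, so $J_{\mathfrak p}=0$, meaning $J R_{\mathfrak p}=0$. Every element of $J$ therefore maps to $0$ in $R_{\mathfrak p}$, so $J\subseteq \ker(R\to R_{\mathfrak p})\subseteq\mathfrak p$.

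In fact this shows $J$ lies in every minimal prime, so $J\subseteq\bigcap_{\mathfrak p\in\operatorname{Min}(R)}\mathfrak p=\operatorname{nil}(R)=0$ by reducedness. This is stronger than the statement and is consistent with it, since a nonzero module with rank is automatically faithful generically. Alternatively, one could keep the paper's route: show each regular $j$ cannot kill the torsion-free $M\neq0$, so $J\subseteq\operatorname{Zd}(R)$, and then apply prime avoidance exactly as in Proposition~\ref{31}. That version uses neither the rank nor finite generation, only finiteness of $\operatorname{Min}(R)$.
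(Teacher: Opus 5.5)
Your proof is correct, but your main route differs from the paper's. The paper just reruns the proof of Proposition~\ref{31}. From $JM=0$ and torsion-freeness, every element of $J$ is a zero-divisor, so $J\subseteq\operatorname{Zd}(R)=\bigcup_{\mathfrak p\in\Min(R)}\mathfrak p$, and prime avoidance finishes. This is exactly the alternative you sketch in your last sentence. It never uses the rank hypothesis, so it actually proves the stated conclusion for every nonzero torsion-free $M$.

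Your route instead uses the rank hypothesis essentially. You have $r\ge 1$ (from torsion-freeness and $M\neq0$), and the rank is the same at every minimal prime, so $M_{\mathfrak p}\neq 0$ for every $\mathfrak p\in\Min(R)$. Hence $JR_{\mathfrak p}=0$ for all of them, and $J\subseteq\operatorname{nil}(R)=0$.

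So the paper's argument gains generality in $M$ but reaches only the weak conclusion, while yours reaches the strong conclusion $J=0$. Your computation in fact shows that a module of positive rank is faithful, not merely ``generically'' so: $\Ann_R(M)$ kills $M\otimes_RQ\cong Q^r$ with $r\ge1$, hence maps to $0$ in $Q$, and $R\hookrightarrow Q$. Thus, under the stated hypotheses, the corollary is really an instance of the faithful case of Proposition~\ref{31}, and its stated conclusion undersells the hypotheses.

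The rank hypothesis is genuinely what separates the two conclusions. Take $R=k[x,y]/(xy)$, $M=R/(x)$ and $J=(x)$. Then $M$ is nonzero and torsion-free with $JM=0$, but it has rank $1$ at $(x)$ and rank $0$ at $(y)$, and $J\neq 0$.
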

	
	\begin{proof}
		The proof of Proposition~\ref{31} applies verbatim. Indeed, the only
		properties of \(M\) used there are \(M\neq0\) and the fact that every regular
		element of \(R\) acts injectively on \(M\). No finite generation assumption is
		required.
	\end{proof}
	
	\begin{observation}\label{inj}
		Let \(R\) be a noetherian reduced ring, \(J\subseteq R\) an ideal, and \(M\) a nonzero divisible \(R\)-module such that \(JM=0\). Then \(J\) is contained in some minimal prime ideal of \(R\).
	\end{observation}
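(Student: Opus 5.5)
Divisibility here means: for every non-zero-divisor $r\in R$ we have $rM=M$. The plan is to follow the pattern of Proposition~\ref{31}. The target this time is the implication ``$JM=0$ and $M\neq 0$ divisible $\Rightarrow$ $J$ contains no regular element.'' Once that is in hand, the reduced and noetherian hypotheses together with prime avoidance finish the argument.

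\emph{Step 1: every element of $J$ is a zero-divisor.} Suppose some $j\in J$ is regular on $R$. Divisibility gives $jM=M$. But $jM\subseteq JM=0$, so $M=0$, contradicting $M\neq 0$. Hence $J\subseteq \operatorname{Zd}(R)$. Note that the map $M\to M/JM$ plays no role here: the hypothesis $JM=0$ is given directly, and surjectivity of multiplication replaces the injectivity that torsion-freeness supplied in Proposition~\ref{31}.

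\emph{Step 2: identify the zero-divisors.} Since $R$ is noetherian, $\operatorname{Zd}(R)=\bigcup_{\mathfrak p\in\operatorname{Ass}(R)}\mathfrak p$, and $\operatorname{Ass}(R)$ is finite. Since $R$ is reduced, $(0)=\bigcap_{\mathfrak p\in\Min(R)}\mathfrak p$ is a minimal primary decomposition, so $\operatorname{Ass}(R)=\Min(R)$. Therefore $J\subseteq \bigcup_{\mathfrak p\in\Min(R)}\mathfrak p$, a finite union of primes.

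\emph{Step 3: prime avoidance.} Applying prime avoidance to this finite union yields some $\mathfrak p\in\Min(R)$ with $J\subseteq\mathfrak p$.

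No step is a genuine obstacle. The only point needing care is the convention for ``divisible'': the argument requires $rM=M$ for regular $r$. If instead one only has $rM=M$ for $r\neq 0$, the argument still goes through, since a regular element is in particular nonzero (unless $R=0$, which forces $M=0$). Finiteness of $\Min(R)$, which is used essentially in Step 3, comes from the noetherian hypothesis, consistent with Example~\ref{E3.6}.
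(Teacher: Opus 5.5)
Your proof is correct and follows essentially the same route as the paper: a regular $j\in J$ would give $M=jM\subseteq JM=0$, so $J\subseteq\operatorname{Zd}(R)=\bigcup_{\mathfrak p\in\Min(R)}\mathfrak p$, and prime avoidance over the finitely many minimal primes finishes the argument. The only thing you add is an explicit justification of $\operatorname{Ass}(R)=\Min(R)$ via the primary decomposition $(0)=\bigcap_{\mathfrak p\in\Min(R)}\mathfrak p$, which the paper simply asserts.
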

	
	\begin{proof}
		If by divisible we mean divisible by every regular element of \(R\), then the argument is very clean.
		Suppose \(0\neq M\) is divisible and \(JM=0\). If \(j\in J\) were regular, divisibility gives
		\(
		M=jM.
		\)
		But \(JM=0\) gives \(jM=0\). Hence \(M=0\), contradiction.
		Thus every \(j\in J\) is a zero-divisor. Since \(R\) is noetherian and reduced,
		\(
		\operatorname{Zd}(R)=\bigcup_{\mathfrak p\in\operatorname{Min}(R)}\mathfrak p.
		\)
		Because there are finitely many minimal primes, prime avoidance gives
		\(
		J\subseteq\mathfrak p
		\)
		for some \(\mathfrak p\in\operatorname{Min}(R)\).
	\end{proof}

We continue by extending {Proposition} \ref{31}.  To this end, we record a remark.

\begin{remark}\label{rem:ass-general}
	For a general module \(M\), the set \(\operatorname{Ass}_R(M)\) may be empty or
	infinite. If \(R\) is Noetherian and \(M\neq 0\) is finitely generated,
	then \(\operatorname{Ass}_R(M)\) is finite and nonempty. In the non-Noetherian setting,
	one sometimes uses instead the \emph{weakly associated primes}:
	\(\mathfrak p\in\operatorname{wAss}_R(M)\) if \(\mathfrak p\) is a prime ideal
	minimal over \(\operatorname{Ann}_R(m)\) for some \(m\in M\). For more details, see \cite[3.3.1]{G}.
\end{remark}
	
\begin{theorem}\label{41}
	Let \(R\subseteq A\) be an extension of noetherian reduced rings such
	that \(A\) is torsion-free as an \(R\)-module. Let \(J\triangleleft A\),
	and let \(M\) be a finitely generated \(R\)-module which is torsion-free
	as an \(A\)-module. If
	\(
	M\hookrightarrow_R M/JM,
	\)
	then
	\(
	J\subseteq\mathfrak p
	\)
	for some \(\mathfrak p\in\operatorname{Min}(A).
	\)
\end{theorem}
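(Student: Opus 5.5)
The argument of Proposition~\ref{31} cannot be copied directly. There we used $j\varphi(m)=\varphi(jm)$, but now $j\in J\subseteq A$ while $\varphi$ is only $R$-linear. The plan is to first prove $JM=0$ by a length argument after passing to the total quotient ring of $R$, in the spirit of Lemma~\ref{thm:artinian-base}. After that, the zero-divisor and prime-avoidance argument of Proposition~\ref{31} is run over $A$ instead of $R$. As in Proposition~\ref{31}, we assume $M\neq 0$; otherwise the statement is vacuous or degenerate.

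\emph{Step 1 (localization).} Let $S=R\setminus\bigcup_{\mathfrak q\in\operatorname{Min}(R)}\mathfrak q$. Since $R$ is noetherian and reduced, $S$ is exactly the set of regular elements of $R$. Because $A$ is torsion-free over $R$, every $s\in S$ is a nonzerodivisor on $A$, hence regular in $A$. Because $M$ is torsion-free over $A$, every $s\in S$ therefore acts injectively on $M$, so $M\to S^{-1}M$ is injective. The ring $S^{-1}R$ is a finite product of fields, being the total quotient ring of a noetherian reduced ring, and so it is artinian. The module $S^{-1}M$ is finitely generated over $S^{-1}R$ because $M$ is finitely generated over $R$, so $S^{-1}M$ has finite length.

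\emph{Step 2 (length count).} Localization is exact, so $\varphi$ gives an injection $S^{-1}M\hookrightarrow S^{-1}M/S^{-1}(JM)$. This yields $\ell(S^{-1}M)\le \ell(S^{-1}M)-\ell(S^{-1}(JM))$, hence $S^{-1}(JM)=0$. Since $JM\subseteq M$ and $M\to S^{-1}M$ is injective, we conclude $JM=0$. I expect this to be the main point: only $R$-linearity of $\varphi$ is available, and finite generation of $M$ over $R$ is exactly what makes the length argument run. The paper's promised example, which shows that $R$-finiteness of $M$ cannot be dropped, fits this picture.

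\emph{Step 3 (zero-divisors and prime avoidance).} Suppose some $j\in J$ is a nonzerodivisor of $A$. Since $M$ is torsion-free over $A$, multiplication by $j$ is injective on $M$. But $jM=0$ and $M\neq0$, a contradiction. Hence $J\subseteq\operatorname{Zd}(A)=\bigcup_{\mathfrak p\in\operatorname{Min}(A)}\mathfrak p$, using that $A$ is noetherian and reduced. This is a finite union, so prime avoidance gives $J\subseteq\mathfrak p$ for some $\mathfrak p\in\operatorname{Min}(A)$.
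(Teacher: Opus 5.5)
Your argument is correct and is essentially the paper's proof. The paper localizes at each minimal prime $\mathfrak p$ of $R$, compares dimensions over the field $R_{\mathfrak p}$ to get $(JM)_{\mathfrak p}=0$, and then deduces $JM=0$ from $\operatorname{Ass}_R(JM)\subseteq\operatorname{Ass}_R(R)$. You instead handle all minimal primes at once through the total quotient ring $S^{-1}R\cong\prod_{\mathfrak p\in\operatorname{Min}(R)}R_{\mathfrak p}$ and descend via the injectivity of $M\to S^{-1}M$; the final zero-divisor and prime-avoidance step over $A$ is identical.

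One small correction: the case $M=0$ is not vacuous but a genuine counterexample (e.g.\ $R=A=k[x]$, $J=(x)$, where $0\hookrightarrow 0$ holds but $(x)\not\subseteq(0)$), so $M\neq0$ is a necessary extra hypothesis, which the paper also uses tacitly.
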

	
	\begin{proof}
	First, \(R\subseteq A\) and \(M\) is torsion-free  as an \(A\)-module imply that
 \(M\) is torsion-free  as an \(R\)-module.
 	By \cite[Page 183]{Matsumura}, a ring is reduced if and only if it satisfies \((R_0)\) and \((S_1)\).
		
		\begin{align*}
			(R_0): & \quad R_\mathfrak{p} \text{ is regular } \forall \mathfrak{p} \text{ with } \operatorname{ht}(\mathfrak{p}) = 0 \Rightarrow R_\mathfrak{p} \text{ is a field}. \\
			(S_1): & \quad \forall \mathfrak{p} \in \operatorname{Ass}(R) \Rightarrow \mathfrak{p} \in \operatorname{Min}(R).
		\end{align*}
		
		Now, let \(\mathfrak{p} \in \operatorname{Ass}(R)\). Consider the exact sequence
		\(
		0 \to JM \to M \to M/JM \to 0.
		\)
		Localizing at \(\mathfrak{p}\), we obtain
		\[
		0 \to 	(JM)_\mathfrak{p} \to M_\mathfrak{p} \to (M/JM)_\mathfrak{p} \to 0.
		\]
		Here, $\operatorname{vdim}(-) $ means the vector space dimension over the   field $R_\mathfrak{p}$.	On the one hand we have the additivity property of dimension on short exact sequence. Namely, we have
		\[
		\operatorname{vdim}(M_\mathfrak{p}) = \operatorname{vdim}((M/JM)_\mathfrak{p})+\operatorname{vdim}((JM)_\mathfrak{p})\ge \operatorname{vdim}((M/JM)_\mathfrak{p}).
		\]
		On the other have and in view of the injection   \(M_\mathfrak{p} \to (M/JM)_\mathfrak{p}\), we have \[
		\operatorname{vdim}(M_\mathfrak{p}) \leq \operatorname{vdim}((M/JM)_\mathfrak{p}).
		\]Combining these yield  \(
		\operatorname{vdim}(M_\mathfrak{p}) =\operatorname{vdim}((M/JM)_\mathfrak{p}).
		\)
		Consequently, 	
		\begin{align*}
			\operatorname{vdim}(JM)_\mathfrak{p} = 0  
			\Rightarrow (JM)_\mathfrak{p} = 0\quad(\ast)
		\end{align*}

	Since $\operatorname{wAss}$ is subset of support, $\mathfrak{p} \notin\operatorname{wAss}(JM)$.	But \(JM \subseteq M\), so
		\[
	\operatorname{wAss}_R(JM) \subseteq \operatorname{wAss}_R(M) \subseteq \operatorname{wAss}_R(R)=\operatorname{Ass}_R(R).
		\]
		It turns out from  $(\ast)$ that  \(\operatorname{Ass}_R(JM) = \emptyset\) and so \(JM = 0\), because it is torsion-free over $R$. In particular, it is zero as an $A$-module which gives
		\[
		J  \subseteq \operatorname{Zd}_A(M) =\bigcup_{\mathfrak{p} \in \operatorname{wAss}(M)} \mathfrak{p} \subseteq  \bigcup_{\mathfrak{p} \in \operatorname{Ass}(A)} \mathfrak{p}.
		\]
		Therefore, by prime avoidance \(J \subseteq \mathfrak{p}\) for some \(\mathfrak{p} \in \operatorname{Ass}(A)\).
	\end{proof}
	
\begin{corollary}\label{42}
	Adopt the above notation.	If \(M\) is in addition faithful over $A$, then \(J = 0\).
\end{corollary}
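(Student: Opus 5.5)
The plan is to extract, from the setting of Theorem~\ref{41}, the single fact that $JM=0$, and then apply faithfulness. This is exactly how the second half of Proposition~\ref{31} was obtained from its first half. The cleanest route does not go through the associated-prime argument of Theorem~\ref{41} at all. Instead I would repeat the first paragraph of the proof of Proposition~\ref{31}, which uses only the linearity of the embedding.

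Let $\varphi:M\hookrightarrow M/JM$ be the given injection. First I would note that $M/JM$ is annihilated by $J$. The obstacle is that $\varphi$ is only $R$-linear, while $J\subseteq A$, so the identity $j\varphi(m)=\varphi(jm)$ is not available for $j\in J\setminus R$. I see two ways to handle this.

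\begin{enumerate}
\item[(a)] If the embedding is understood to be $A$-linear (as $M$ carries an $A$-module structure), then for $j\in J$ and $m\in M$ we get $\varphi(jm)=j\varphi(m)=0$. Injectivity then gives $jm=0$, so $JM=0$ directly.
\item[(b)] If $\varphi$ is only $R$-linear, I would instead invoke the conclusion reached inside the proof of Theorem~\ref{41}. That proof shows $(JM)_\mathfrak p=0$ for each $\mathfrak p\in\operatorname{Ass}(R)$ by a length/dimension count over the fields $R_\mathfrak p$. It then deduces $JM=0$, using that $JM\subseteq M$ is torsion-free over $R$ and hence has all weakly associated primes among $\operatorname{Ass}(R)=\operatorname{Min}(R)$. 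So $JM=0$ is available as an intermediate result of that argument.
\end{enumerate}

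Once $JM=0$ is known, faithfulness over $A$ means $\operatorname{Ann}_A(M)=0$. Since $J\subseteq\operatorname{Ann}_A(M)$, we conclude $J=0$.

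The main obstacle is the step $JM=0$ in case (b). In particular, one must check that the dimension count genuinely forces $(JM)_\mathfrak p=0$ at every minimal prime of $R$, and that a torsion-free $R$-module vanishing at all minimal primes is zero. For this I would argue that a nonzero element $x\in JM$ has $\operatorname{Ann}_R(x)$ consisting of zero-divisors, hence contained in some minimal prime $\mathfrak p$ by prime avoidance. Then $x/1\neq0$ in $(JM)_\mathfrak p$, which is a contradiction.
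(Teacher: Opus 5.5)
Your route (b) is correct and is essentially the paper's (implicit) argument. The dimension count over the fields $R_\mathfrak p$, $\mathfrak p\in\operatorname{Min}(R)$, in the proof of Theorem~\ref{41} gives $JM=0$; this uses that $M$ is finitely generated over $R$ and torsion-free over $R$, the latter because $A$ is torsion-free over $R$. Faithfulness then gives $J\subseteq\operatorname{Ann}_A(M)=0$, exactly as in the second half of Proposition~\ref{31}, and alternative (a) is not needed since the hypothesis only provides an $R$-linear embedding.
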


	But, we can reduce the  finitely generated of $M$ to only has rank (see \cite[\S 1.4]{BH}).

\begin{remark}
	Let \(R\subseteq A\) be a noetherian reduced ring, \(J \triangleleft A\) an ideal, and \(M\) has rank over $R$ and torsion-free  as an \(A\)-module.
If \(M \hookrightarrow_R (M/JM)\), then \(J \subseteq \mathfrak{p}\) for some \(\mathfrak{p} \in \operatorname{Min}(A)\).
\end{remark}
The finiteness of $M$ as $R$-module is important:
\begin{example}
	Let $R:=k$ be a field, set
$
M=A=k[x,y]$ and $ J=(x).$	Then $A$ is a Noetherian reduced ring and $M$ is a finitely generated torsion-free
	$A$-module.
 We have
 $
	M/JM
	=
	A/xA
	\cong k[y].
	$ 	
	As $k$-vector spaces,
 $
	A=R[x,y]$ {and}\ 
	$R[y]
	$ 	both have countably infinite dimension over $k$. Hence there exists a
	$k$-linear isomorphism
 $
	\varphi:A\to A/xA.
	$ 	For example, choose a bijection
$
	\pi:\mathbb N^2\to\mathbb N
	$ 	and define
 $
	\varphi(x^a y^b)=y^{\pi(a,b)}$
 with $(a,b\geq0),$ 	extending $k$-linearly.
	Thus
$
	M\hookrightarrow_R M/JM.
	$
	However,
 $
	\operatorname{Min}(R)=\{(0)\},
$
	whereas
$
	J=(x)\neq(0).
	$ Consequently,
 $
	J\not\subseteq\mathfrak p$
 {for every } $\mathfrak p\in\operatorname{Min}(R).$ 
\end{example}

	\maketitle
\section{The dual surjection problem}

We now consider a dual version of Question~ \ref{Bb}. Instead of asking whether
\(A\) can be embedded into a proper quotient \(A/J\), we ask whether a
nonzero ideal \(J\) can be mapped onto \(A\), or whether \(A\) can be mapped
onto \(J\).

\begin{question}\label{dBb} 
	Let \(A\) be a finitely generated \(R\)-algebra and let \(0\neq J\subseteq A\)
	be an ideal. (i): Suppose that there exists an \(R\)-linear epimorphism
	\(
	A\twoheadrightarrow_R J,
	\)
	or (ii):  suppose there  is an \(R\)-linear epimorphism
	\(J\twoheadrightarrow_R A.
	\)
	Under what hypotheses must \(J\cong_R A\)?
\end{question}

	 {Question} \ref{dBb}(i) is not  true even $\pi$ is splitting, for example let $A=\oplus_{n\geq 0}A_n$ be graded ring and let
	 $J=\oplus_{n> 0}A_n$ be the irrelevant ideal and $R:=A_0$.
	 The explicit finite extension is given by Remark \ref{53}.
	 
 \begin{remark}\label{rem:dual-special}
Suppose \(R=A\). If there exists an \(A\)-linear epimorphism
$A\twoheadrightarrow J,	$ then \(J\) is a cyclic \(A\)-module, and hence \(J\) is a principal ideal. On the other hand, if there exists an \(A\)-linear epimorphism
 the epimorphism splits.
Thus
 $ 	J\cong A\oplus\ker(\pi).$ In particular, if \(A\) is an integral domain, then
$$\operatorname{rank}_A(J)=1=\operatorname{rank}_A(A),$$
and consequently \(\ker(\pi)=0\). Hence \(J\cong A\).
	 \end{remark}

	\begin{theorem}\label{thm:dual-domain}
	Let $R\subseteq A$ be integral domains such that $A$ is finite as an
	$R$-module. Let $0\neq J\subseteq A$ be an ideal. Then
	$
	\operatorname{rank}_R(J)=\operatorname{rank}_R(A).
	$ 	Moreover, if there exists either an $R$-linear epimorphism
	$
	\pi:A\twoheadrightarrow_R J
	$ or an $R$-linear epimorphism
	$
	\pi:J\twoheadrightarrow_R A,
	$ then $\pi$ is an isomorphism. In particular,
	$
	A\cong_R J.
	$ 
\end{theorem}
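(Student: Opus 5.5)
The plan is to pass to the fraction field $K=\operatorname{Frac}(R)$ and compare vector space dimensions, as in the proof of Theorem~\ref{a}. Throughout, $\operatorname{rank}_R(N)$ means $\dim_K(K\otimes_R N)$. This is finite for every $R$-submodule $N$ of $A$: since $K$ is flat over $R$, the inclusion $N\subseteq A$ gives $K\otimes_R N\subseteq K\otimes_R A$, and $K\otimes_R A$ is finite-dimensional because $A$ is finite over $R$. This also avoids needing $J$ to be finitely generated, since $R$ is not assumed noetherian. Every $R$-submodule of $A$ is torsion-free over $R$, because $R\subseteq A$ and $A$ is a domain.

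\emph{Rank equality.} Since $J\neq 0$, choose $0\neq a\in J$. Multiplication by $a$ is an injective $R$-linear map $A\to A$, because $A$ is a domain. Its image is $aA$, so $A\cong_R aA$. From $aA\subseteq J\subseteq A$ and flatness of $K$ we get
\[
\operatorname{rank}_R(A)=\operatorname{rank}_R(aA)\le \operatorname{rank}_R(J)\le \operatorname{rank}_R(A).
\]
Hence $\operatorname{rank}_R(J)=\operatorname{rank}_R(A)=:n<\infty$.

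\emph{Epimorphisms are isomorphisms.} Let $\pi$ be an $R$-linear epimorphism in either direction between $A$ and $J$, and put $N=\ker\pi$. Tensoring $0\to N\to X\xrightarrow{\pi} Y\to 0$ with the flat module $K$ gives an exact sequence
\[
0\to K\otimes_R N\to K\otimes_R X\to K\otimes_R Y\to 0,
\]
where $K\otimes_R X$ and $K\otimes_R Y$ are both $n$-dimensional $K$-vector spaces. A surjection between $K$-spaces of the same finite dimension is bijective, so $K\otimes_R N=0$. Now $N$ is a submodule of $X\in\{A,J\}$, so it is torsion-free over $R$. Therefore the natural map $N\to K\otimes_R N$ is injective, and $N=0$. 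Thus $\pi$ is an isomorphism and $A\cong_R J$.

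The only delicate point is this last step: deducing $N=0$ from $K\otimes_R N=0$. That requires torsion-freeness of $N$, which holds precisely because both $A$ and $J$ sit inside the domain $A$ containing $R$. Everything else is routine dimension counting.
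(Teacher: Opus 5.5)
Your proof is correct. Its second half is the paper's argument: tensor the kernel sequence with the fraction field, compare finite dimensions, then use torsion-freeness of the kernel. You also usefully make explicit the rank--nullity step that the paper compresses into ``since $A\otimes_RQ=J\otimes_RQ$, we obtain $K\otimes_RQ=0$''.

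Where you differ is the rank equality. The paper observes that $A\otimes_RQ$ is a finite-dimensional domain over $Q$, hence a field, so the nonzero ideal $J\otimes_RQ$ must be all of $A\otimes_RQ$. You instead use the $R$-linear embedding $A\xrightarrow{\cdot a}aA\subseteq J\subseteq A$ for some $0\neq a\in J$ and sandwich the ranks.

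The paper's route gives the sharper fact that $J\otimes_RQ$ and $A\otimes_RQ$ coincide as subspaces, so $\pi\otimes_RQ$ becomes a surjective endomorphism of a single finite-dimensional space. The cost is invoking that a finite-dimensional domain over a field is a field. Your route needs only that multiplication by a nonzero element of a domain is injective. It uses finiteness of $A$ over $R$ solely to make the ranks finite, which is exactly what the dimension count in the second step requires.
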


\begin{proof}
	Let $Q:=\operatorname{Frac}(R)$. Since $A$ is a domain and is finite
	over $R$, the $Q$-algebra
	$A_Q:=A\otimes_R Q
	$
	is a finite-dimensional domain over the field $Q$. Hence $A_Q$ is a
	field.
	Since $0\neq J\subseteq A$, we have
	$$
	0\neq J\otimes_R Q\subseteq A\otimes_R Q=A_Q.
	$$
	As $A_Q$ is a field, it follows that
	$
	J\otimes_R Q=A_Q.
	$	Consequently,
	
	$$
	\operatorname{rank}_R(J)
	=\operatorname{vdim}_Q(J\otimes_R Q)
	=\operatorname{vdim}_Q(A\otimes_R Q)
	=\operatorname{rank}_R(A).
	$$
	
	Suppose first that there exists an $R$-linear epimorphism
	$\pi:A\twoheadrightarrow_R J.$	
	Let $K:=\ker(\pi)$. Since $Q$ is flat over $R$, tensoring the exact
	sequence
$0\to K\to A\xrightarrow{\pi}J
\to0$
	with $Q$ gives an exact sequence
	$$0\longrightarrow K\otimes_RQ
	\longrightarrow A\otimes_RQ
	\longrightarrow J\otimes_RQ
	\longrightarrow0.
	$$
	Since
	$A\otimes_RQ=J\otimes_RQ,$  we obtain
	$K\otimes_RQ=0. $ But $K\subseteq A$ and $A$ is torsion-free as an $R$-module. Hence
	$K$ is torsion-free as an $R$-module, and therefore
	$
	K=0.
	$	Thus $\pi$ is an isomorphism.
	
	Now suppose that there exists an $R$-linear epimorphism
$\pi:J\twoheadrightarrow_R A.
	$ Again let $K:=\ker(\pi)$. Tensoring
$0\to K\to J\xrightarrow{\pi}A
	\to0$	with $Q$ gives
	
	$$
	0\longrightarrow K\otimes_RQ
	\longrightarrow J\otimes_RQ
	\longrightarrow A\otimes_RQ
	\longrightarrow0.
	$$
	
	Since
	$
	J\otimes_RQ=A\otimes_RQ,
	$ 	we obtain
	$
	K\otimes_RQ=0.
	$	As $K\subseteq J\subseteq A$ and $A$ is torsion-free over $R$, we again
	obtain $K=0$. Hence $\pi$ is an isomorphism.
	Therefore, in either case,
	$
	A\cong_R J.
	$	\end{proof}
 
 \begin{remark}\label{53}
 	The domain hypothesis in Theorem~\ref{thm:dual-domain} cannot be omitted
 	in general. Let
  $ A=k[x,y]/(x,y)^2$ and $J=(x,y).$ 	Then
  $ 	A=k\oplus kx\oplus ky,$ and 
 	$J=kx\oplus ky. $ 	Define a \(k\)-linear epimorphism
 $ 	A\twoheadrightarrow _kJ,
 	$ by assignments
  $
 	1\longmapsto x$, $x\longmapsto y$ and $y\longmapsto0.$ This map even admits a \(k\)-linear section. Nevertheless,
 $ J\neq A. $ 	Thus an epimorphism \(A\twoheadrightarrow_R J\) need not imply
 	\(J\cong_R A\) over an artinian base.
 \end{remark}

 \begin{example}\label{ex:artinian-dual}
 	Let
 $
 	A=R=k[x,y]/(x,y)^2
 	$ 	and let \(J=(x)\). Define an \(R\)-linear map
  $
 	\pi:A\to J
 	$	by
 $
 	\pi(1)=x$,  $\pi(x)=0$ and  $\pi(y)=0.$
 Then \(\pi\) is surjective. However,
  $
 	J\neq A. $
  Thus Question~\ref{dBb}(i) has a negative answer even when
 	\(R=A\) is artinian.
 \end{example}

 	{Question} \ref{dBb}(ii) is true, if $R$ is artinian:

 \begin{proposition}\label{prop:artinian-dual}
 	Let \(R\) be an artinian ring, let \(A\) be a finite \(R\)-algebra, and let
 	\(0\neq J\subseteq A\) be an ideal. If there exists an \(R\)-linear
 	epimorphism
 $	J\twoheadrightarrow_R A,$	then this epimorphism is an isomorphism. In particular,
 $J\cong_R A.$
 \end{proposition}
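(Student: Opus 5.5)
We argue by comparing lengths, in the same way as in Lemma~\ref{thm:artinian-base}. Since $R$ is artinian and $A$ is a finite $R$-algebra, $A$ is a finitely generated module over an artinian ring, so $\ell_R(A)<\infty$. The ideal $J$ is an $R$-submodule of $A$, so $\ell_R(J)\le \ell_R(A)<\infty$ as well. This finiteness is what will make the comparison work.

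Let $\pi:J\twoheadrightarrow_R A$ be the given epimorphism and set $K:=\ker(\pi)$. The exact sequence $0\to K\to J\to A\to 0$ of finite length $R$-modules gives
\[
\ell_R(J)=\ell_R(K)+\ell_R(A).
\]
Combining this with $\ell_R(J)\le\ell_R(A)$ yields $\ell_R(K)\le 0$. Hence $\ell_R(K)=0$, so $K=0$ and $\pi$ is an isomorphism. In particular $J\cong_R A$.

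For the inequality $\ell_R(J)\le\ell_R(A)$ we use the exact sequence $0\to J\to A\to A/J\to0$, which gives $\ell_R(A)=\ell_R(J)+\ell_R(A/J)$. As a consequence we also get $\ell_R(A/J)=0$, that is, $J=A$. So the isomorphism is really an $R$-linear automorphism of $A$, which is a stronger conclusion than the statement requires.

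The only step needing care is the finiteness of length. It comes directly from $R$ being artinian and $A$ being a finitely generated $R$-module, since finitely generated modules over artinian rings have finite length. Beyond that, the argument is routine additivity of length, and the hypothesis $J\neq0$ is not actually used.
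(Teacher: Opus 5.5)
Your proof is correct and is essentially the paper's length-additivity argument: $\ell_R(J)\le\ell_R(A)$ together with $\ell_R(J)=\ell_R(K)+\ell_R(A)$ forces $\ell_R(K)=0$. Your extra observation that $\ell_R(A/J)=0$, so $J=A$, is also correct; it slightly sharpens the statement and confirms that the hypothesis $J\neq 0$ is not needed.
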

 
 \begin{proof}
 	Since \(J\subseteq A\), we have
 $ \ell_R(J)\leq\ell_R(A). 	$ On the other hand, an \(R\)-linear epimorphism \(J\twoheadrightarrow A\)
 	gives
 $ \ell_R(J)\geq\ell_R(A). $	Hence
 $ \ell_R(J)=\ell_R(A). $	If \(K\) denotes the kernel of the epimorphism, then
 $
 	0\to K\to J\to A\to0
 	$ gives
 $\ell_R(J)=\ell_R(K)+\ell_R(A).$ Thus \(\ell_R(K)=0\), and consequently \(K=0\).
 \end{proof}

\begin{proposition}\label{mcmno}
	Let \((R,\mathfrak m)\) be a noetherian local ring of positive dimension,
	let \(R\to A\) be a finite local homomorphism, and let \(J\) be an ideal
	of \(A\). Assume that \(A/J\) is a maximal Cohen--Macaulay
	\(R\)-module. Then there is no \(R\)-linear epimorphism
 $
	J\twoheadrightarrow_R A.
	$ 	
\end{proposition}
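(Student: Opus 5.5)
The plan is to compare lengths after reducing modulo a system of parameters that is regular on \(A/J\), in the spirit of Lemma~\ref{thm:artinian-base} and Proposition~\ref{prop:artinian-dual}, and then finish with Nakayama. Suppose, for a contradiction, that \(\pi:J\twoheadrightarrow_R A\) is an \(R\)-linear epimorphism. By convention a maximal Cohen--Macaulay module is nonzero, so \(A/J\neq0\) and \(J\neq A\). Put \(d=\dim R>0\). Since \(A/J\) is maximal Cohen--Macaulay over \(R\), there is a system of parameters \(\underline{x}=x_1,\ldots,x_d\subseteq\mathfrak m\) of \(R\) which is \(A/J\)-regular.

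\textbf{Step 1 (reduce mod \(\underline{x}\)).} Tensor the sequence \(0\to J\to A\to A/J\to0\) with \(R/(\underline{x})\). Since \(\underline{x}\) is \(A/J\)-regular, \(\operatorname{Tor}_1^R(R/(\underline{x}),A/J)=0\) (see \cite[Ex. 1.1.12 (a)]{BH}), exactly as in Proposition~\ref{thm:CM-reduction}. This gives an injection
\[
J/\underline{x}J\hookrightarrow A/\underline{x}A .
\]
By right exactness, \(\pi\) also induces a surjection \(J/\underline{x}J\twoheadrightarrow A/\underline{x}A\).

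\textbf{Step 2 (lengths).} Since \(A\) is finite over \(R\), both \(A/\underline{x}A\) and \(J/\underline{x}J\) are finitely generated over the artinian ring \(R/(\underline{x})\), so they have finite length. Here \(J\) is finitely generated because \(R\) is noetherian. The injection gives \(\ell_R(J/\underline{x}J)\le\ell_R(A/\underline{x}A)\), and the surjection gives the reverse inequality. Hence the lengths are equal, and the injection \(J/\underline{x}J\hookrightarrow A/\underline{x}A\) is an isomorphism because its cokernel has length zero.

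\textbf{Step 3 (Nakayama).} Step 2 says the image of \(J\) generates \(A/\underline{x}A\), that is, \(A=J+\underline{x}A\). Then \(A/J=\underline{x}(A/J)\subseteq\mathfrak m(A/J)\). Since \(A/J\) is a finitely generated \(R\)-module, Nakayama's lemma gives \(A/J=0\), contradicting \(A/J\neq0\). Therefore no such \(\pi\) exists.

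I expect the only delicate point to be Step 1: checking that the Tor-vanishing really turns the inclusion \(J\subseteq A\) into an injection modulo \(\underline{x}\). This is where the maximal Cohen--Macaulay hypothesis on \(A/J\), rather than on \(A\), is used. The hypothesis \(d>0\) plays no essential role in this argument; the same length comparison works with \(\underline{x}\) empty.
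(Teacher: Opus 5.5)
Your proposal is correct and follows the paper's proof step for step. You reduce $0\to J\to A\to A/J\to 0$ modulo a system of parameters that is $A/J$-regular to get $J/\underline{x}J\hookrightarrow A/\underline{x}A$, compare lengths with the induced surjection, and conclude $A/J=0$ by Nakayama. Your version is also slightly cleaner: you only ask for a system of parameters of $R$ that is $A/J$-regular, which exists because $A/J$ is maximal Cohen--Macaulay, whereas the paper asks for an ``$R$-regular'' one that need not exist when $R$ is not Cohen--Macaulay. Your remark that $d>0$ is not needed is right as well.
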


\begin{proof}
	Suppose, to the contrary, that there exists an \(R\)-linear epimorphism
 $
	\pi:J\twoheadrightarrow A.
	$ 	Let \(d=\dim R\), and choose an \(R\)-regular system of parameters
 $
	\mathbf{x}=x_1,\ldots,x_d
	$ 	which is also \(A/J\)-regular. Tensoring the exact sequence
 $
	0\to J\to A\to A/J
	\to0
 $
 with \(R/(\mathbf{x})\), and using the \(A/J\)-regularity of
	\(\mathbf{x}\), gives an injection
 $
	J/\mathbf{x}J\to A/\mathbf{x}A 
	$ (see the argument presented in Proposition
	\ref{thm:CM-reduction}).	On the other hand, tensoring the epimorphism
 $ J\twoheadrightarrow _RA
	$ 	with \(\overline{R}:=R/(\mathbf{x})\) gives an epimorphism
 $
	J/\mathbf{x}J\twoheadrightarrow_{\overline{R}} A/\mathbf{x}A.
	$ 	Hence
	$$
\ell(	J/\mathbf{x}J)=\ell( A/\mathbf{x}A)\quad(\ast)
	$$	
Let us apply $(\ast)$ along with the following induced short  exact sequence
$$
	0\longrightarrow J/\mathbf{x}J
	\longrightarrow A/\mathbf{x}A
	\longrightarrow (A/J)/\mathbf{x}(A/J)
	\longrightarrow0,
	$$
to deduce that
$ (A/J)/\mathbf{x}(A/J)=0. $ 	By Nakayama's lemma,
 $A/J=0, $ 	contrary to the assumption that \(A/J\) is a nonzero maximal
	Cohen--Macaulay \(R\)-module.
\end{proof}
\begin{remark}
Let us start 
{Example} \ref{bas13} with a  Cohen-Macaulay ring $R$.	Recall from {Example} \ref{bas13} that $A/J$ is $R$-free and so big Cohen-Macaulay. In particular, the finite assumption of   {Proposition} \ref{mcmno} is  needed, even if $A$ is finitely generated as an $R$-algebra.
\end{remark}

\begin{corollary}\label{cormcmno}
	Suppose  $R\to A$ is an extension of local rings of  dimension one, and $P\in \operatorname{Ass}(A)\setminus\{\fm_A\}$.  Then there is no \( \pi:	P\twoheadrightarrow_R  A
\).  
\end{corollary}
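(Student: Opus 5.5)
Set $J:=P$ and reduce to Proposition~\ref{mcmno}. The setting matches: $R\to A$ is an extension of local rings of dimension one, so $\dim R=1>0$. We read the map $R\to A$ as finite and local, as in Proposition~\ref{mcmno}; this is the standing framework of that result and of the introduction's version with $R=A$. It then suffices to show that $A/P$ is a maximal Cohen--Macaulay $R$-module, i.e. that $\depth_R(A/P)=\dim R=1$.

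The key step is checking this depth condition. Since $P\in\operatorname{Ass}(A)$, $P$ is a prime ideal of $A$, so $A/P$ is a domain. Since $P\neq\fm_A$, the domain $A/P$ is not a field, so $\fm_A$ contains a nonzero element of $A/P$, which is a nonzerodivisor. Hence $\depth_{A}(A/P)\geq 1$.

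Next we pass from $A$ to $R$. By finiteness, $\rad(\fm_RA)=\fm_A$, and the independence of local cohomology (used already in Proposition~\ref{xpr} and Proposition~\ref{lc}) gives
$H^0_{\fm_R}(A/P)\cong H^0_{\fm_A}(A/P)=0$, so $\depth_R(A/P)\geq1$. Also $A/P$ is a finitely generated $R$-module with $\dim_R(A/P)\le\dim A$. Finiteness makes the extension integral, so $\dim A=\dim R=1$ (cf.\ \cite[9.1~(a)]{Matsumura}). Therefore $1\le\depth_R(A/P)\le\dim_R(A/P)\le 1$, and $A/P$ is maximal Cohen--Macaulay over $R$.

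Proposition~\ref{mcmno} now applies with $J=P$ and rules out any $R$-linear epimorphism $P\twoheadrightarrow_R A$. The main obstacle is this depth transfer between $A$ and $R$: it needs $R\to A$ to be finite and local so that $\fm_A$-torsion equals $\fm_R$-torsion. If the hypothesis reads only ``extension of local rings'', I would use exactly this finiteness convention. In the case $R=A$ it is automatic, and the argument reduces to the observation that the one-dimensional domain $A/P$ has positive depth.
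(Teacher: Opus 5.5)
Your proof is correct and follows the paper's route. The paper likewise obtains $1\le\depth(A/P)\le\dim(A/P)\le\dim(A)=1$ from $P$ being a prime different from $\fm_A$, and then applies Proposition~\ref{mcmno} with $J=P$; your transfer of depth from $A$ to $R$ via $H^0_{\fm_R}(A/P)\cong H^0_{\fm_A}(A/P)$, and your reading of the extension as finite and local, make explicit a step and a hypothesis of Proposition~\ref{mcmno} that the paper leaves implicit.
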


\begin{proof}
Since 	 $P\in \operatorname{Ass}(A)\setminus\{\fm_A\}$ we see $0<\depth(A/P)$. We know $1\leq \depth(A/P)\leq\dim(A/P)\leq\dim(A)=1$, i.e., \(A/P\) is  maximal Cohen--Macaulay. It remains to apply {Proposition} \ref{mcmno}.	
\end{proof}

\begin{corollary}\label{ccormcmno}
	Suppose  $R=A$ is   a local ring  of  dimension one, and $P\in \operatorname{Ass}(A)$.  Then there is no \( \pi:	P\twoheadrightarrow_R  A
	\).  
\end{corollary}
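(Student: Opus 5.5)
Corollary~\ref{ccormcmno} is the special case $R=A$ of Corollary~\ref{cormcmno}, so the plan is to reduce to that corollary, or directly to Proposition~\ref{mcmno}. With $R=A$ the structure map is the identity, which is a finite local homomorphism. The first step is to handle the possibility $P=\fm_A$, which Corollary~\ref{cormcmno} excludes. Since $\dim A=1$, the ring is not artinian, so $\fm_A$ is not minimal. An associated prime may nonetheless be $\fm_A$ when $\depth A=0$, so this case needs a separate argument.

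For $P\in\operatorname{Ass}(A)$ with $P\neq\fm_A$, the argument of Corollary~\ref{cormcmno} applies verbatim. We have $\depth(A/P)\ge1$ because $P\ne\fm_A$ is prime, so $\fm_A\notin\operatorname{Ass}(A/P)=\{P\}$. Also $\dim(A/P)\le 1=\dim A$. Hence $A/P$ is a maximal Cohen--Macaulay $A$-module, and Proposition~\ref{mcmno}, with $J=P$ and $R=A$ of positive dimension, rules out any epimorphism $P\twoheadrightarrow A$.

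For $P=\fm_A$, I would argue directly with $A$-linearity. Suppose $\pi:\fm_A\twoheadrightarrow A$ is $A$-linear and surjective. Choose $m\in\fm_A$ with $\pi(m)=1$. Every element of $\fm_A$ acts on $A$ through $\pi$, so $m\cdot 1=m=\pi(m\cdot m')$ would be needed for some $m'$; this route is awkward. The cleaner route is to tensor with the residue field $k$. The surjection $\fm_A\to A$ yields a surjection $\fm_A\otimes k\to A\otimes k=k$, which is harmless, so that does not finish either. Instead, since $\fm_A\in\operatorname{Ass}(A)$, pick $0\ne s$ with $s\fm_A=0$. Then $s\cdot\pi(x)=\pi(sx)=0$ for all $x\in\fm_A$, so $sA=s\,\pi(\fm_A)=0$, forcing $s=0$, a contradiction. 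This argument uses only $A$-linearity of $\pi$, which holds since $R=A$.

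Assembling the two cases proves the corollary. The main point to get right is exactly this split. The case $P=\fm_A$ falls outside Corollary~\ref{cormcmno} and Proposition~\ref{mcmno}, because $A/\fm_A$ is not maximal Cohen--Macaulay in dimension one. The annihilator argument closes that gap, and it in fact works for any associated prime once $R=A$.
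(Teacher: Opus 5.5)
Your proposal is correct and is essentially the paper's proof: you reduce the case $P\neq\fm_A$ to Corollary~\ref{cormcmno}, and you dispose of $P=\fm_A$ by the same annihilator computation $s\,\pi(\fm_A)=\pi(s\fm_A)=0$, which forces $s=0$. Your closing remark is also right and slightly sharpens the paper: the annihilator argument alone rules out every $P\in\operatorname{Ass}(A)$, and indeed any ideal with nonzero annihilator, so neither the case split nor the appeal to Proposition~\ref{mcmno} is actually needed.
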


\begin{proof}
By {Corollary} \ref{cormcmno},  it is enough to deal with $P=\fm \in \operatorname{Ass}(A)$. Suppose on the way of contradiction that there is 	
\(\pi:P\twoheadrightarrow_{R}  A\). Let $x$ be so that $P=(0:_Rx)$,
such a thing exists by definition of associated primes. Then $0=\pi(0)=\pi(xP)=x\pi(P)=xR$.
Thus $x=0$, and so $P=(0:_Rx)=(0:_R0)=R$, a contradiction, as primes are proper ideals.
\end{proof}

The assumption $P\in \operatorname{Ass}(A)$ in {Corollary} \ref{ccormcmno} is needed:
\begin{example}
	Let $A:=R:=k[[X]]$, and set $P:=(X)$.  There is a surjective  \(R\)-linear map \(\pi:P\twoheadrightarrow_{R}  A\) defined by 
	\(
	Xf(x)\mapsto f(x).
	\)
	\end{example}
Most of the results extend to the module setting. Here, we present a sample parallel to {Proposition} \ref{pr}.
	
\begin{observation}\label{inj2}
 Let $ M$ be an injective module over $A$.  
Let \(J\subseteq A\) be an ideal. If there exists an  
epimorphism
		$\pi:	JM\twoheadrightarrow_A M$	then  
		$JM=M.$ In particular, if $J$ is nilpotent, then $M=0$.
\end{observation}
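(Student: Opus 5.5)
Since $M$ is injective over $A$, the plan is to lift along the inclusion $\iota: JM\hookrightarrow M$. The surjection $\pi:JM\twoheadrightarrow M$ is $A$-linear with injective target $M$, so it extends to an $A$-linear map $\tilde\pi:M\to M$ with $\tilde\pi\circ\iota=\pi$. This is the module analogue of the lifting used in Proposition~\ref{pr}, with injectivity of $M$ in place of projectivity of $A$ and the arrows reversed.

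Next we compare images. Because $\tilde\pi$ is $A$-linear, $\tilde\pi(JM)=J\tilde\pi(M)\subseteq JM$. On the other hand, $\tilde\pi(JM)=\pi(JM)=M$ since $\pi$ is onto. Combining the two gives $M\subseteq JM$, hence $JM=M$. No finiteness or noetherian hypothesis is needed here, and Nakayama is not used.

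For the nilpotent case, suppose $J^n=0$. Iterating $M=JM$ gives $M=J^nM=0$.

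There is no serious obstacle. The only point to check is that the extension $\tilde\pi$ is $A$-linear, which holds because injectivity is taken in the category of $A$-modules. Linearity is exactly what yields $\tilde\pi(JM)\subseteq JM$. If only $R$-linear maps were allowed, one would instead need $M$ to be $R$-injective and $J$ to act compatibly, and the argument could fail.
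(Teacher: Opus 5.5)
Your proof is correct and follows the paper's argument: you use injectivity of $M$ to extend $\pi$ along $JM\subseteq M$ to an $A$-linear $\tilde\pi:M\to M$, deduce $M=\pi(JM)=\tilde\pi(JM)=J\tilde\pi(M)\subseteq JM$, and then iterate to get $M=J^nM=0$ when $J^n=0$. Unlike the paper, you do not assert that $\tilde\pi$ is surjective, which is indeed not needed.
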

	
\begin{proof}
Since  $M$ is injective, the map $\pi:	JM\twoheadrightarrow M$ extends to an $A$-homomorphism $\tilde{\pi}: M\twoheadrightarrow M$ which is epic as well:
\[\xymatrix{& 0 \ar[r] & JM \ar[r]^{\subseteq} \ar[d]_{\pi} & M \ar[dl]^{\exists \tilde{\pi}} \\
&& M}
\]

Since $\tilde{\pi}$ is $A$-linear, it follows that  $M=\pi(JM)=\tilde{\pi}(JM)\subseteq J\tilde{\pi}(M) =J  M .$ 
	By reaping this process, $M= J  M=J(JM)=\cdots =J^nM  $  for all $n$, which imply the last claim.
	\end{proof}

\end{document}